\documentclass[12pt,reqno]{amsart}
\usepackage{fullpage}
\usepackage{colonequals}
\usepackage{amsmath,amssymb,amsthm,url}
\usepackage{xcolor}
\usepackage[utf8]{inputenc}
\usepackage[english]{babel}
\usepackage{bbm}
\usepackage{enumerate}
\usepackage{tikz}
\usepackage{enumitem}
\usetikzlibrary{calc}
\usetikzlibrary{positioning}
\usepackage{graphicx}
\usepackage{mathrsfs}
\usepackage[colorlinks=true, pdfstartview=FitH, linkcolor=blue, citecolor=blue, urlcolor=blue]{hyperref}

\newtheorem{thm}{Theorem}[section]
\newtheorem{lem}[thm]{Lemma}
\newtheorem{prop}[thm]{Proposition}
\newtheorem{cor}[thm]{Corollary}

\theoremstyle{definition}
\newtheorem{defn}[thm]{Definition}

\newcommand{\R}{\mathbb{R}} 
\newcommand{\C}{\mathcal{C}} 

\newcommand{\Z}{\mathbb{Z}}
\newcommand{\diam}{\operatorname{diam}}
\newcommand{\dist}{\operatorname{dist}}

\title{Extremal diameters of 3-coloring graphs of trees}

\author{Shamil Asgarli}
\address{Department of Mathematics \& Computer Science \\ Santa Clara University \\ CA 95053 \\ USA}
\email{sasgarli@scu.edu}

\author{Sara Krehbiel}
\address{Department of Mathematics \& Computer Science \\ Santa Clara University \\ CA 95053 \\ USA}
\email{skrehbiel@scu.edu}

\author{Simon MacLean}
\address{Department of Mathematics \& Computer Science \\ Santa Clara University \\ CA 95053 \\ USA}
\email{smaclean@scu.edu}

\author{Gjergji Zaimi}
\email{gjergjiz@gmail.com}

\subjclass[2020]{Primary 05C15; Secondary 05C05, 05C12, 05C35}

\keywords{Coloring graphs, combinatorial reconfiguration, extremal trees, diameter}

\begin{document}

\begin{abstract}
Given a tree $T$, its 3-coloring graph $\mathcal{C}_3(T)$ has as vertices the proper 3-colorings of $T$, with edges joining colorings that differ at exactly one vertex. We call the diameter of $\mathcal{C}_3(T)$ the \emph{3-coloring diameter} of $T$. We introduce the notion of \emph{balanced labelings} of $T$ and show that the 3-coloring diameter equals the maximum $L_1$-norm of a balanced labeling. Using this equivalence, we determine the maximum and minimum values of the 3-coloring diameter over all trees on $n$ vertices and characterize the extremal trees.
\end{abstract}

\maketitle

\section{Introduction}

Given a graph $G=(V,E)$, a \emph{proper $k$-coloring} is an assignment $c\colon V\to \{1, 2, \dots, k\}$ such that $c(u)\neq c(v)$ for every edge $uv\in E$. The number of proper $k$-colorings of $G$ is the \emph{chromatic polynomial} $\pi_G(k)$, which is a polynomial function of $k$. Another natural object to consider is the $k$-coloring graph $\C_k(G)$, which organizes proper $k$-colorings of $G$ as follows. Vertices of $\C_k(G)$ are proper $k$-colorings of $G$, and two vertices $c_1$ and $c_2$ are adjacent in $\C_k(G)$ if they differ on the color of exactly one vertex. Note that the chromatic polynomial $\pi_G(k)$ is the number of vertices in $\C_k(G)$. We view $\C_k(G)$ as a \emph{reconfiguration graph}, where we toggle the color of exactly one vertex to move to a new configuration (see the survey \cite{Heu13} for a broader perspective on combinatorial reconfiguration problems).

In this paper, we determine the maximum and minimum values of the function $T\mapsto \diam(\C_3(T))$ as $T$ ranges over all trees on $n$ vertices. We obtain a complete classification of the trees that achieve these extremal values.

\begin{thm}\label{thm:main} Let $n\geq 7$ be a fixed positive integer. Among all trees $T$ on $n$ vertices:
\begin{enumerate}
    \item The unique tree with maximum 3-coloring diameter is the path $P_n$, and this maximum value is $\binom{\lceil\frac{n}{2}\rceil -1}{2}+\binom{\lfloor\frac{n}{2}\rfloor+2}{2}$.
    \item The unique trees with minimum 3-coloring diameter are the star $K_{1,n-1}$ and the double stars $S(a,b)$ satisfying $|a-b|\le 4$; in each of these cases, the minimum value is $\left\lfloor \tfrac{3n}{2}\right\rfloor$.  
\end{enumerate}
\end{thm}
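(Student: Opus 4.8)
The plan is to reduce everything to the extremal analysis of balanced labelings. By the equivalence between the 3-coloring diameter and the maximum $L_1$-norm of a balanced labeling, we have $\diam(\C_3(T)) = D(T) := \max_\ell \|\ell\|_1$, where $\ell$ ranges over balanced labelings of $T$, so it suffices to determine $\max_T D(T)$ and $\min_T D(T)$ over trees on $n$ vertices together with the extremal trees. I would treat the notion of balanced labeling and this equivalence as black boxes and work entirely with the combinatorial quantity $D(T)$. The two parts of the theorem then become independent optimization problems, and for each I would first exhibit the claimed extremal trees and compute $D$ on them, and then prove a matching inequality valid for every tree, together with a uniqueness analysis.

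For part (1), I would first construct an explicit balanced labeling on $P_n$ realizing the claimed value: concretely a unit-step labeling of V-shape (strictly decreasing along one arm to a single minimum, then strictly increasing along the other), with the two arm lengths and the location of the minimum chosen to maximize $\sum_v |\ell(v)|$ subject to the balance constraint. Checking that its $L_1$-norm equals $\binom{\lceil n/2\rceil -1}{2}+\binom{\lfloor n/2\rfloor+2}{2}$ gives $D(P_n)\ge$ this value, and a short argument that no balanced labeling of the path does better yields equality. The substantive step is the global inequality $D(T)\le D(P_n)$ for all trees $T$, with equality only for $T=P_n$. I would prove this by a branch-straightening argument: if $T$ is not a path it has a vertex of degree at least three, and a local surgery that reduces branching (detaching a branch and reattaching it so as to lengthen the tree toward a path) never decreases the maximal norm of a balanced labeling, and strictly increases it unless $T$ is already a path. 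Iterating drives any tree to $P_n$ and gives both the bound and uniqueness.

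For part (2), I would directly evaluate $D$ on the candidate minimizers, whose internal structure is trivial: the star $K_{1,n-1}$ has a single non-leaf vertex and the double star $S(a,b)$ has two, so optimizing $\|\ell\|_1$ reduces to a short optimization in the center value(s) and in how the leaves split. This computation should return $\lfloor \tfrac{3n}{2}\rfloor$ for the star and a closed formula for $S(a,b)$ that equals $\lfloor \tfrac{3n}{2}\rfloor$ precisely when $|a-b|\le 4$ and strictly exceeds it otherwise, which is exactly where the threshold $4$ is produced. To finish, I would establish the universal lower bound $D(T)\ge \lfloor \tfrac{3n}{2}\rfloor$ for every tree, building a balanced labeling of norm at least $\lfloor \tfrac{3n}{2}\rfloor$ from the bipartition of $T$, and then show that any tree other than the listed stars and near-balanced double stars admits a balanced labeling of strictly larger norm, forcing $D(T)>\lfloor \tfrac{3n}{2}\rfloor$ and pinning down the minimizers.

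The main obstacles I anticipate are the two sharp ``over all trees'' directions and the exactness of the classifications. For the maximum, the delicate point is making the straightening surgery monotone in $D$ while controlling the equality case, so that uniqueness of $P_n$ genuinely holds; for the minimum, the hard part is the precise arithmetic of $D(S(a,b))$, since the answer turns on a small additive constant and determines both the cutoff $|a-b|\le 4$ and the uniqueness of the minimizers. The hypothesis $n\ge 7$ is presumably what cleanly separates these extremal families, and I would track carefully where it is invoked in each argument.
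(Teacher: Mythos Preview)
Your reduction to balanced labelings matches the paper, but both substantive arguments you sketch diverge from what the paper actually does, and in part~(1) you are proposing to settle something the authors explicitly leave open.

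For part~(1), the paper does \emph{not} use a branch-straightening surgery. The introduction raises precisely your proposed lemma---``is there a local move on trees that monotonically increases the 3-coloring diameter?''---as an open question whose answer would yield a poset on trees with $P_n$ on top. So the monotonicity you need is not known, and you give no indication of how to prove it. The paper's route is entirely different: it first shows, via a convexity analysis of $\Phi(x)=\|h+x\|_1$, that any maximum-norm balanced labeling has median $M\in\{1,\tfrac32,2\}$ (Lemma~3.3), and then for each value of $M$ compares $\|h\|_1$ to $S_{\max}=\sum_{x\in I_n}|x|$ by a ``flattening'' argument on the label multiset, with equality forcing the label multiset to be exactly the interval $I_n$ and hence $T\cong P_n$. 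No tree surgery appears.

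For part~(2), two smaller divergences. First, the universal lower bound does not come from the bipartition of $T$ (which can be $1$ versus $n-1$, e.g.\ for the star) but from an \emph{arbitrary} split of $V$ into parts of sizes $\lceil n/2\rceil$ and $\lfloor n/2\rfloor$ labeled $1$ and $2$. Second, the paper never computes $D(K_{1,n-1})$ or $D(S(a,b))$ by optimizing balanced labelings; the upper bounds are obtained directly in $\C_3(T)$ by exhibiting, for each pair of colorings, two explicit reconfiguration paths whose lengths sum to $3n$, so the shorter has length at most $\lfloor 3n/2\rfloor$. The threshold $|a-b|\le4$ emerges not from a closed formula for $D(S(a,b))$ but from the converse: whenever a tree has a leaf $v$ with $|N_2(v)|\le\lceil n/2\rceil-4$ (which $|a-b|>4$ forces in a double star), one can build a balanced labeling of norm $\lfloor 3n/2\rfloor+1$. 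Your labeling-optimization approach to the upper bound may well work for these very simple trees, but it is not what the paper does, and you would still need a separate argument (the paper uses two more explicit labeling lemmas plus a reduction to $\diam(T)\le3$) to rule out all other trees.
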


The \emph{double star} $S(a, b)$ is the tree formed by joining the centers of two stars $K_{1, a}$ and $K_{1, b}$ with an edge. It has $n=a+b+2$ vertices and diameter 3, assuming $a, b\geq 1$. For instance, the double star graphs $S(7, 2)$ and $S(6, 3)$ both have $11$ vertices:

\begin{center}
\begin{minipage}{0.4\textwidth}
\centering
\begin{tikzpicture}[
    vertex/.style={circle, draw, fill=black, inner sep=1.5pt},
    edge/.style={thick},
    label distance=2pt
]

\node[vertex, label=below:$v$] (v) at (0, 0) {};
\node[vertex, label=below:$w$] (w) at (4, 0) {};

\draw[edge] (v) -- (w);

\foreach \i [count=\k from -3] in {1,...,7} {
    \node[vertex] (vl\i) at ({0 + \k * 0.5}, 1) {};
    \draw[edge] (v) -- (vl\i);
}

\node[vertex] (wr1) at (3.5, 1) {}; 
\node[vertex] (wr2) at (4.5, 1) {}; 
\draw[edge] (w) -- (wr1);
\draw[edge] (w) -- (wr2);

\end{tikzpicture}

\vspace{0.03cm} 
$\quad\quad S(7, 2)$
\end{minipage}
\begin{minipage}{0.5\textwidth}
\centering
\begin{tikzpicture}[
    vertex/.style={circle, draw, fill=black, inner sep=1.5pt},
    edge/.style={thick},
    label distance=2pt
]

\node[vertex, label=below:$v$] (v) at (0, 0) {};
\node[vertex, label=below:$w$] (w) at (4, 0) {};

\draw[edge] (v) -- (w);

\foreach \i [count=\k from -2] in {0,...,5} {
    \node[vertex] (vl\i) at ({-0.2+ \k * 0.5}, 1) {};
    \draw[edge] (v) -- (vl\i);
}

\foreach \i [count=\k from -1] in {1,...,3} {
    \node[vertex] (wr\i) at ({4 + \k * 0.5}, 1) {};
    \draw[edge] (w) -- (wr\i);
}

\end{tikzpicture}

\vspace{0.03cm} 
$\quad\quad S(6, 3)$
\end{minipage}
\end{center}

The graph $\C_3(T)$ is isomorphic to the $1$-skeleton of the Hom complex $\operatorname{Hom}(T,K_3)$ \cite{Koz08}. Much of the existing work on these complexes emphasizes their topological properties and applications to graph coloring. In contrast, we focus on the metric structure of these $1$-skeletons, and in particular on $\diam(\C_3(T))$. In the next paragraphs, we place Theorem~\ref{thm:main} in context and explain how it sharpens existing bounds in the literature.

A central open problem in this area is Cereceda's conjecture \cite{Cer07}: if an $n$-vertex graph $G$ is $d$-degenerate, then the diameter of $\C_k(G)$ is at most $O(n^2)$ for $k\geq d+2$. Cereceda proved a weaker version of the conjecture with more colors allowed, namely when $k\ge 2d+1$ \cite[Theorem~5.22]{Cer07}. Even though the stated conclusion of \cite[Theorem~5.22]{Cer07} is $O(n^2)$, the proof yields a more explicit upper bound $\frac{n(n+1)}{2} = \bigl(1+o(1)\bigr)\frac{n^2}{2}$. As a result, Cereceda's conjecture is true in the special case where $d=1$ and $k=3$. In particular, since trees are $1$-degenerate, $\C_3(T)$ has diameter at most $\bigl(1+o(1)\bigr)\frac{n^2}{2}$ for every tree $T$ on $n$ vertices. While the $O(n^2)$ conjecture remains open for general $d$, a polynomial upper bound $O(n^{d+1})$ has been established \cite{BH22}.

On the other hand, if one more color is available, the diameter grows (at most) linearly: Bousquet and Perarnau~\cite{BP16} proved that for every $d$-degenerate graph $G$ on $n$ vertices and every $k\ge 2d+2$, the recoloring diameter satisfies $\diam(\C_k(G))\le (d+1)n$. In particular, $\diam(\C_4(T))\leq 2n$ for every tree $T$ on $n$ vertices. This linear bound helps motivate the focus of the present paper on 3-colorings of trees; some trees genuinely exhibit quadratic growth in the diameter of $\C_3(T)$.

We note that quadratic lower bounds are already known for paths. Bonamy et al. \cite[Theorem~11]{BJLPP14} showed that the 3-coloring diameter of the path $P_n$ is $\Theta(n^2)$, and their proof yields the explicit lower bound $\diam(\C_3(P_n)) \ge \frac{1}{4}(n^2-1)$. This lower bound is consistent with (and implied by) our exact formula for $\diam(\C_3(P_n))$ in Theorem~\ref{thm:main} (1).

We also remark that Cambie, van Batenburg, and Cranston \cite{CvBC24} determined the diameter of $\C_k(K_{p,q})$ for any complete bipartite graph $K_{p,q}$ up to a small additive constant; their result for the special case $\C_{3}(K_{1,n-1})$ is consistent with our findings. The present paper is the first to systematically classify the extremal 3-coloring diameters for trees in general. It would be interesting to further understand the distribution of these values between the extremes. In particular, is there a local move on trees that monotonically increases the 3-coloring diameter? The existence of such a move could yield a poset structure on trees reminiscent of the one from \cite{CL15}. Such a poset would have the path graph as the unique maximal element and the double stars from Theorem~\ref{thm:main} as minimal elements.

\textbf{Structure of the paper.} The paper is organized as follows. In Section~\ref{sec:balanced-labelings}, we introduce the framework of balanced labelings and establish the connection between these labelings and distances in $\C_3(T)$ (Theorem~\ref{thm:labelings} and Corollary~\ref{cor:balanced}). In Section~\ref{sec:median}, we study the median label of a maximum balanced labeling. In Section~\ref{sec:maximum-3-coloring-diameter}, we prove that the path graph $P_n$ uniquely maximizes the 3-coloring diameter for $n\geq 7$. In Section~\ref{sec:minimum-3-coloring-diameter}, we characterize the trees achieving the minimum 3-coloring diameter as the star and nearly symmetric double stars for $n\geq 7$. Finally, Appendix~\ref{sec:small-trees} provides data for trees with $n\le 6$ vertices.

\section{Computing 3-coloring diameter via balanced labelings}\label{sec:balanced-labelings}

Throughout this section, $T$ denotes a tree with vertex set $V$ and edge set $E$. We identify proper 3-colorings of $T$ with functions $f\colon V\to \mathbb{Z}/3\mathbb{Z}$ which satisfy
$f(u)\neq f(v)$ for all $uv\in E$. If $g$ is obtained from $f$ by changing the color at a single vertex $v$ such that $g(v)=f(v)+1$ (respectively $g(v)=f(v)-1$), we say $g$ is obtained from $f$ by a \emph{$(+1)$-toggle} (respectively \emph{$(-1)$-toggle}) at $v$. The \emph{3-coloring graph} $\C_3(T)$ has vertex set equal to all proper 3-colorings of $T$, with edges joining proper colorings that differ by a single vertex toggle (i.e., a $(+1)$ or $(-1)$-toggle).

A \emph{labeling} of $T$ is a map $h\colon V\to \mathbb{Z}$ satisfying $|h(u)-h(v)|\leq 1$ for each edge $uv\in E$. Equivalently, $h$ is a graph homomorphism from $T$ to the infinite path graph on $\mathbb{Z}$ with self-loops at each vertex. In particular, the labels are supported on a contiguous subset of $\mathbb Z$; that is, the image of $h$ is an interval. 

We relate these integer labelings to the reconfiguration graph by viewing them as lifts of the difference between two colorings. Suppose $f$ and $g$ are proper 3-colorings of $T$. A labeling $h\colon V\to \mathbb{Z}$ is \emph{$(f,g)$-compatible} if 
\[
h(v) \equiv g(v)-f(v)\pmod{3}
\]
for every vertex $v\in V$.

We denote the set of all $(f,g)$-compatible labelings by $D(f,g)$. Notice that if $h_1,h_2 \in D(f, g)$, then $h_1(v)-h_2(v)=0 \pmod{3}$ for every $v\in V$.

\begin{lem}\label{lem:shift-3m}
If $h\in D(f,g)$ and $m\in\mathbb{Z}$, then $h+3m\in D(f,g)$. Moreover, every element of $D(f,g)$ has this form for some $m\in\mathbb{Z}$.
\end{lem}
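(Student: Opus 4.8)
The lemma has two parts. First, I would verify the easy inclusion: if $h\in D(f,g)$, then for every vertex $v$ we have $h(v)+3m \equiv h(v) \equiv g(v)-f(v) \pmod 3$, so the congruence condition defining $(f,g)$-compatibility is preserved. I must also check that $h+3m$ is still a labeling, i.e.\ that $|(h(u)+3m)-(h(v)+3m)| = |h(u)-h(v)| \le 1$ for each edge $uv$; since the additive shift cancels, this is immediate from $h$ being a labeling. This handles the forward direction.

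The substantive part is the ``moreover'' claim: any two elements $h_1, h_2 \in D(f,g)$ differ by a constant multiple of $3$. The plan is to set $d \colonequals h_1 - h_2$ and show $d$ is a constant function whose value is divisible by $3$. Divisibility by $3$ is recorded in the excerpt already (the observation that $h_1(v)-h_2(v)\equiv 0 \pmod 3$ for every $v$, since both reduce to $g(v)-f(v) \bmod 3$). So the real content is showing $d$ is \emph{constant} across all of $V$. For this I would exploit the labeling (Lipschitz) condition on each edge: for any edge $uv$, both $|h_1(u)-h_1(v)|\le 1$ and $|h_2(u)-h_2(v)|\le 1$, so $|d(u)-d(v)| = |(h_1(u)-h_2(u)) - (h_1(v)-h_2(v))| \le |h_1(u)-h_1(v)| + |h_2(u)-h_2(v)| \le 2$. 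Combined with $d(u)-d(v) \equiv 0 \pmod 3$, the only integer in $[-2,2]$ divisible by $3$ is $0$, forcing $d(u)=d(v)$ on every edge.

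Finally, since $T$ is a tree (in particular connected), the relation $d(u)=d(v)$ along every edge propagates to $d$ being constant on the whole vertex set: I would fix a root and argue by induction along paths, or simply note that the set where $d$ equals its value at a fixed vertex is both open and closed in the connectivity sense and hence all of $V$. Writing the common value as $3m$ (which is legitimate by the divisibility observation) gives $h_1 = h_2 + 3m$, completing the proof.

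I do not anticipate a genuine obstacle here; the argument is short. The one point requiring care is to use \emph{both} the mod-$3$ congruence and the Lipschitz bound simultaneously: neither alone forces $d$ to be constant, but together the ``gap'' of $3$ in the modular condition cannot be bridged within the Lipschitz window of width $2$. Making that interplay explicit on each edge, and then invoking connectivity of the tree, is the crux.
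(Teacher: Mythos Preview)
Your proposal is correct and follows essentially the same approach as the paper's own proof: both verify the easy inclusion directly, then for the converse use the triangle inequality on each edge to bound $|d(u)-d(v)|\le 2$, combine this with $d(u)-d(v)\equiv 0\pmod 3$ to force $d(u)=d(v)$, and propagate via connectivity of $T$.
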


\begin{proof}
First, the map $h'\colonequals h+3m$ satisfies both the congruence condition modulo $3$ and the inequality $|h'(u)-h'(v)|\leq 1$ for all edges $uv\in E$; hence $h'$ is $(f,g)$-compatible. Conversely, suppose that $h'\in D(f,g)$ is arbitrary. Then we have $h'(v)-h(v)\equiv 0\pmod{3}$ for all $v\in V$. For any $uv\in E$, we have
$$|(h'(u)-h(u))-(h'(v)-h(v))|\le |h'(u)-h'(v)|+|h(u)-h(v)|\le 2.$$
Since $(h'(u)-h(u))-(h'(v)-h(v))$ is a multiple of $3$ and has absolute value at most $2$, it equals $0$. We conclude that $h'(u)-h(u)=h'(v)-h(v)$. As $T$ is connected, it follows that $h'(v)-h(v)$ is constant throughout $V$. Thus, $h'(v)-h(v)=3m$ for some $m\in\mathbb{Z}$.
\end{proof}

\begin{lem}\label{lem:homs-representable}
For every labeling $h\colon V\to \mathbb{Z}$, there exist 3-colorings $f,g$ of $T$ such that $h\in D(f,g)$.
\end{lem}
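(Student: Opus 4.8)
The plan is to reduce the existence of a compatible pair $(f,g)$ to the construction of a single well-chosen coloring. Observe that if $f$ is any proper 3-coloring and we \emph{define} $g\colon V\to\mathbb{Z}/3\mathbb{Z}$ by $g(v)\colonequals f(v)+h(v)\pmod 3$, then $g(v)-f(v)\equiv h(v)\pmod 3$ automatically, so $h\in D(f,g)$ as soon as $g$ is also a proper 3-coloring. Thus it suffices to produce a proper 3-coloring $f$ for which $g=f+h$ is again proper.

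Next I would record, edge by edge, exactly what this requires. Fix an edge $uv$ and set $\delta\colonequals h(u)-h(v)$, which lies in $\{-1,0,1\}$ because $h$ is a labeling. Properness of $f$ says $f(u)-f(v)\not\equiv 0\pmod 3$, while properness of $g$ says $f(u)-f(v)\not\equiv -\delta\pmod 3$. Unwinding the three cases, on a \emph{flat} edge ($\delta=0$) the only constraint is $f(u)\neq f(v)$, whereas on a \emph{non-flat} edge ($\delta=\pm 1$) the two constraints force the single value $f(u)-f(v)\equiv\delta\pmod 3$, equivalently $f(v)-h(v)\equiv f(u)-h(u)\pmod 3$.

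With these local conditions in hand, I would build $f$ by rooting $T$ at an arbitrary vertex $r$, assigning $f(r)$ freely, and propagating outward: for each non-root vertex $v$ with parent $u$, set $f(v)\equiv f(u)+\bigl(h(v)-h(u)\bigr)\pmod 3$ when $uv$ is non-flat, and choose $f(v)$ to be either of the two colors different from $f(u)$ when $uv$ is flat. Because every edge of a rooted tree is a parent–child edge and $T$ has no cycles, each vertex is assigned exactly once and no constraint is ever revisited, so $f$ is a well-defined proper 3-coloring meeting the required condition on every edge. A direct check in the flat and non-flat cases then confirms that $g=f+h$ is proper as well, and since $g-f\equiv h\pmod 3$ by construction, this completes the proof.

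The step I expect to be the crux is the treatment of flat edges. The tempting closed form $f\equiv h\pmod 3$ (which would give $g\equiv 2h$) is proper on non-flat edges but collapses on flat edges, where $h$ is constant yet $f$ must change; this is precisely where one needs the extra freedom together with the acyclicity of $T$. An equivalent and perhaps more conceptual way to organize the argument is to contract all non-flat edges, observe that the resulting graph is again a tree whose edges are exactly the flat edges, and take $f-h\pmod 3$ to be any proper coloring of that tree (trees being even $2$-colorable). This viewpoint makes transparent both that $f$ is forced along non-flat edges and that the only genuine choice lives on the flat edges.
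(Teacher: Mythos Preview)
Your proof is correct, but it proceeds differently from the paper's. The paper argues by induction on $|V|$: it deletes a leaf $v$, finds $f',g'$ on $T-v$ by induction, and then observes that the two available extensions of $f'$ at $v$ together with the two available extensions of $g'$ at $v$ realize all three residues mod $3$ as differences, so some choice makes $g(v)-f(v)\equiv h(v)$. Your argument instead reduces everything to constructing a single coloring: set $g=f+h$ and classify the edge constraints on $f$ as either ``forced'' (non-flat edges, where $f(u)-f(v)$ is pinned to $h(u)-h(v)$) or ``free'' (flat edges, where any proper choice works), then build $f$ by rooted propagation. The contraction viewpoint you sketch at the end---that $f-h\pmod 3$ is constant on non-flat components and need only properly color the tree obtained by contracting them---is a genuinely cleaner explanation of \emph{why} the construction cannot get stuck, and it makes visible exactly where the freedom lives. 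The paper's induction is shorter to write down but hides this structure; your approach costs a little more setup but yields a more transparent description of all compatible pairs $(f,g)$.
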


\begin{proof}
We use induction on the number of vertices. When $|V|=1$, the tree has only one vertex $v$; given $h(v)$, choose $f(v)$ arbitrarily and set $g(v)=f(v)+h(v)$.

For the inductive step, let $T$ be a tree on $n\geq 2$ vertices. Suppose that $v$ is a leaf vertex of $T$. By the inductive hypothesis, there exist two 3-colorings $f',g'$ of $T'\colonequals T-v$ such that the restriction of $h$ to $V(T')$ is a labeling in $D(f',g')$. To extend $f'$ to a coloring $f$ of $T$, we have two options for $f(v)$, which can be written as $x,x+1 \pmod{3}$. Similarly $g'$ can be extended to a coloring $g$ of $T$ for some $y,y+1 \pmod{3}$. Since the differences $(y+1)-x, y-x, y-(x+1)$ cover all residues (mod $3$), we choose a pair satisfying $g(v)-f(v)\equiv h(v)\pmod{3}$.
\end{proof}

By Lemma~\ref{lem:shift-3m}, the set $D(f,g)$ is a lattice of labelings separated by uniform shifts of $3$. We are interested in the labelings within this set that minimize the $L_1$-norm.

\begin{defn}
A labeling $h\colon V\to \mathbb{Z}$ is \emph{balanced} if its $L_1$-norm $\|h\|_1 \colonequals \sum_{v\in V}|h(v)|$ satisfies
\[
\|h\|_1\leq \sum_{v\in V}|h(v)+3m|
\]
for all integers $m$.
\end{defn} 

Since the function $x\mapsto \sum_{v\in V}|h(v)+x|$ is convex, this is equivalent to 
\begin{equation}\label{eq:convex-balanced}
\|h\|_1\le \min \left(\|h-3\|_1, \|h+3\|_1\right). 
\end{equation}
In other words, for a labeling $h\colon V\to\Z$ and any pair $(f,g)$ with $h\in D(f,g)$, we have
\[
h \text{ satisfies~\eqref{eq:convex-balanced}} \Longleftrightarrow h \text{ is balanced} \Longleftrightarrow h \text{ minimizes the } L_1 \text{-norm over } D(f, g).
\]
One can think of the tree as being drawn on the integer line, with each vertex
placed at its label.  The balanced condition says that if we slide the entire
arrangement to the left or right by $3$ units, the total ``distance from $0$'' never decreases.

To relate balanced labelings to graph distances in $\C_3(T)$, we first analyze how shortest walks in $\C_3(T)$ behave. The next two lemmas describe structural constraints on such walks.

Suppose $f$ and $g$ are adjacent colorings in $\C_3(T)$ that differ by an $\varepsilon$-toggle at a vertex $v$, where $\varepsilon\in \{-1, 1\}$. That is, $g(v)=f(v)+\varepsilon$ and $g(w)=f(w)$ for all $w\neq v$. We denote by $h_{(f,g)}\in D(f,g)$ the unique labeling in $D(f, g)$ such that $h_{(f,g)}(v)=\varepsilon$ and $h_{(f,g)}(w)=0$ for all $w\neq v$. 

Given a walk $W = (c_0, \dots, c_r)$ in $\C_3(T)$, the two (directed) edges $(c_i, c_{i+1})$ and $(c_j, c_{j+1})$ are \emph{opposite} if they correspond to toggling the same vertex $v$ in opposite directions; this condition is equivalent to $h_{(c_{i}, c_{i+1})}=-h_{(c_{j}, c_{j+1})}$.

\begin{lem}\label{lem:opposite-edges}
No shortest path in $\C_3(T)$ contains opposite edges.
\end{lem}

\begin{proof}
Suppose, for a contradiction, that $P=(c_0,\ldots,c_r)$ is a shortest path in $\C_3(T)$ containing opposite edges. Among all such pairs, choose a pair $(c_i,c_{i+1})$ and $(c_j,c_{j+1})$ with $i<j$ such that the index difference $j-i$ is minimized. Let $v$ be the toggled vertex, with $c_{i+1}(v)=c_i(v)+\varepsilon$ and $c_{j+1}(v)=c_j(v)-\varepsilon$ for some $\varepsilon\in\{-1,1\}$.

If $j=i+1$, then $c_{i+2}=c_i$, and the path obtained by removing $c_{i+1}, c_{i+2}$ is shorter, a contradiction. This shows $j>i+1$. By the minimality of $j-i$, the vertex $v$ is not toggled between steps $i+1$ and $j$. Thus, $c_k(v)$ is constant for $i+1\le k\le j$.

For $c_i$ and $c_{i+1}$ to be adjacent proper colorings differing only at $v$, all neighbors of $v$ must share the same color $a$ in $c_i$. Moreover, $c_i(v)$ and $c_{i+1}(v)$ must be the two colors different from $a$. Since $c_{i+1}(v)-c_i(v)=\varepsilon$, this forces $c_i(v)=a+\varepsilon$ and $c_{i+1}(v)=a+2\varepsilon$. Similarly, for the toggle $c_j \to c_{j+1}$ (direction $-\varepsilon$) to be valid, all neighbors of $v$ must share the same color $a'$ in $c_j$. Since $c_{j+1}(v)-c_j(v)=-\varepsilon$, this forces $c_j(v)=a'-\varepsilon$ and $c_{j+1}(v)=a'-2\varepsilon$. Since $c_{i+1}(v)=c_j(v)$, we have $a+2\varepsilon=a'-\varepsilon$, which implies $a'=a+3\varepsilon\equiv a \pmod{3}$. Thus, the neighbors of $v$ have color $a$ in both $c_i$ and $c_j$.

Consider the subpath $P'=(c_{i+1}, \ldots, c_j)$. Throughout $P'$, $v$ is fixed at color $a+2\varepsilon$. If no neighbor of $v$ changes color during $P'$, then the sequence of toggles defining $P'$ is valid starting from $c_i$ (since the neighbors of $v$ remain colored $a$). Since $c_{j+1}(v) = a+\varepsilon = c_i(v)$, applying the same sequence to $c_i$ yields a parallel walk $(c_i, c'_{i+2}, \dots, c'_{j-1}, c_{j+1})$, where each $c'_k$ differs from $c_k$ only at $v$. This new walk skips the toggles at $v$. The resulting path from $c_0$ to $c_r$ is shorter than $P$ by $2$ edges, contradicting that $P$ is a shortest path.

Therefore, some neighbor $w$ must change color during $P'$. Since $v$ has color $a+2\varepsilon$, $w$ can only toggle between colors $a$ and $a+\varepsilon$. As $w$ starts and ends at color $a$, the path $P'$ must contain opposite toggles at $w$. These opposite toggles are strictly closer in index than the pair at $v$, contradicting the choice of the minimizing pair.
\end{proof}

While Lemma~\ref{lem:opposite-edges} restricts the form of a shortest path, the following lemma ensures that any compatible labeling can be realized by a walk with no unnecessary moves.

\begin{lem}\label{lem:path-norm}
For any $h\in D(f_1,f_2)$, there exists a walk from $f_1$ to $f_2$ in $\C_3(T)$ of length $\|h\|_1$ such that each vertex $w$ is toggled exactly $|h(w)|$ times, all in the direction $\operatorname{sign}(h(w))$. Consequently, the walk has no opposite edges.
\end{lem}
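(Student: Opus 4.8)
The plan is to induct on $\|h\|_1$, constructing the walk from the $f_2$ end by repeatedly pushing an \emph{extreme} vertex toward $0$. For the base case $\|h\|_1=0$ we have $h\equiv 0$, so the compatibility congruence $h(v)\equiv f_2(v)-f_1(v)\pmod 3$ forces $f_1=f_2$, and the empty walk works. For the inductive step I assume $h\not\equiv 0$, so the image interval of $h$ has either a positive maximum $m_{\max}$ or a negative minimum $m_{\min}$; by the evident symmetry I treat the case $m_{\max}\ge 1$, the other being identical with directions reversed.

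The first step is a local dictionary translating the edge condition on labelings into a statement about the colorings. For an edge $vw$, combining $|h(v)-h(w)|\le 1$ with the congruences for $h(v),h(w)$ and the fact that properness forces $f_i(v)-f_i(w)\in\{1,-1\}$, a short case check gives: if $h(v)=h(w)$ then $f_2(v)-f_2(w)=f_1(v)-f_1(w)$, while if $h(v)-h(w)=1$ then $f_2(v)-f_2(w)=-1$. I use this to decide when a top-level vertex $v$ (with $h(v)=m_{\max}$) may be recolored downward. Decreasing $h(v)$ by one produces a labeling $h'$ that is automatically a valid labeling, since every neighbor of $v$ sits at level $m_{\max}$ or $m_{\max}-1$; moreover $h'\in D(f_1,f_2')$ where $f_2'(v)=f_2(v)-1$. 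The backward toggle $f_2\to f_2'$ is legal exactly when $f_2'$ is proper, i.e.\ $f_2(v)-1\ne f_2(w)$ for all neighbors $w$. The dictionary shows a neighbor at level $m_{\max}-1$ never obstructs this (there $f_2(w)=f_2(v)+1\ne f_2(v)-1$ in $\Z/3$), so the only possible obstruction is a neighbor $w$ \emph{also} at level $m_{\max}$ with $f_2(v)-f_2(w)=1$.

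The main obstacle is to guarantee that \emph{some} top-level vertex is obstructed by none of its top-level neighbors. I resolve this by examining the subforest $F$ induced on $\{v:h(v)=m_{\max}\}$, which is nonempty since $m_{\max}$ is attained. Across each edge $vw$ of $F$ the dictionary gives $f_2(v)-f_2(w)\in\{1,-1\}$, so exactly one endpoint is blocked (the vertex $x$ with $f_2(x)-f_2(\text{other})=1$). Orienting each edge of $F$ toward its unique blocked endpoint makes every edge contribute one unit to the in-degree of a single vertex; since a forest has strictly fewer edges than vertices, some vertex of $F$ has in-degree $0$ and is therefore blocked by no top-level neighbor. That vertex $v$ can be recolored downward.

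To finish, take such a $v$, let $h'$ agree with $h$ except $h'(v)=h(v)-1$, and let $f_2'$ be the corresponding coloring, so $h'\in D(f_1,f_2')$ is a valid labeling with $\|h'\|_1=\|h\|_1-1$ (using $h(v)=m_{\max}>0$). By induction there is a walk from $f_1$ to $f_2'$ of length $\|h'\|_1$ toggling each $w$ exactly $|h'(w)|$ times in direction $\operatorname{sign}(h'(w))$; appending the single $(+1)$-toggle $f_2'\to f_2$ at $v$ yields a walk of length $\|h\|_1$. This added move is in direction $\operatorname{sign}(h(v))=+1$ and raises $v$'s count from $h(v)-1$ to $h(v)=|h(v)|$, leaving all other counts unchanged, so the toggle-count and direction conditions hold. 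The symmetric construction (pushing an $m_{\min}$-level vertex up) covers $m_{\max}\le 0$. Finally, since each vertex is only ever toggled in a single direction, the walk contains no opposite edges, as claimed.
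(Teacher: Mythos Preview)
Your proof is correct and takes a genuinely different route from the paper's. The paper inducts on the number of vertices: it deletes a leaf $v$ with neighbor $u$, applies the inductive hypothesis to obtain a walk on $T-v$, and then threads the $|h(v)|$ toggles of $v$ back into that walk via a three-case analysis comparing $|h(v)|$ with $|h(u)|$ (using ``paired toggles'' of $u$ and $v$ to keep the coloring proper). You instead induct on $\|h\|_1$ and peel off one toggle at a time from the $f_2$ end, the key point being your orientation/in-degree argument on the forest $F$ induced at the extreme level, which guarantees an unobstructed vertex via the edge-count inequality for forests. Your approach avoids the case analysis entirely and yields a clean greedy description of the walk (``always toggle an unblocked vertex at the current extreme level''); the paper's approach, by contrast, ties the walk more explicitly to the tree's leaf-deletion structure. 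Both arguments use the same local dictionary between the sign of $h(v)-h(w)$ and the color difference $f_2(v)-f_2(w)$ across an edge, just organized differently.
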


\begin{proof}
We proceed by induction on $n=|V|$. The base case $n=1$ is trivial.

For the inductive step, let $v$ be a leaf with neighbor $u$. Let $T'=T-\{v\}$ and let $h'$ be the restriction of $h$ to $T'$. By induction, there is a walk $W'$ in $\C_3(T')$ realizing $h'$. We construct a walk $W$ in $\C_3(T)$ by inserting the $|h(v)|$ required toggles of $v$ into $W'$.

First, consider the case where at least one of $h(v)$ or $h(u)$ is zero. If $h(u)=0$, $u$ is static in $W'$; we define $W$ by appending the toggle of $v$ (if $|h(v)|=1$) to the end of $W'$. If $h(v)=0$, we extend $W'$ to a walk $W$ in $\C_3(T)$ by keeping the color of $v$ fixed. 

Now assume that $h(v)$ and $h(u)$ are both non-zero. Since $|h(v)-h(u)|\le 1$, they must share the same sign $s \in \{-1, 1\}$. Let $\delta(c) = c(v)-c(u)$. For proper colorings, $\delta \in \{-s, s\}$. A \emph{paired toggle} replaces a single step $c(u) \to c(u)+s$ with a sequence of two steps that toggles both $u$ and $v$ by $s$. To maintain proper coloring, the order is determined by $\delta$: if $\delta=s$, we apply $(v, u)$; if $\delta=-s$, we apply $(u, v)$. Note that a paired toggle preserves the value of $\delta$.

We distinguish three cases based on the relative magnitudes of $|h(v)|$ and $|h(u)|$:

\textbf{Case 1:} $|h(v)|=|h(u)|$. Then $h(v)=h(u)$. The net change in $\delta$ is $h(v)-h(u) \equiv 0 \pmod{3}$, so $\delta_{\text{end}}=\delta_{\text{start}}$. We form $W$ by replacing every toggle of $u$ in $W'$ with a paired toggle sequence determined by the initial difference $\delta_{\text{start}}$.

\textbf{Case 2:} $|h(v)|=|h(u)|+1$. Then $h(v)=h(u)+s$. The net change in $\delta$ is $s \pmod{3}$. Since $\delta \in \{-s, s\}$, this forces $\delta_{\text{start}}=s$ and $\delta_{\text{end}}=-s$. We form $W$ by first applying a solo toggle of $v$. This changes $\delta$ from $s$ to $-s$. We then proceed as in Case 1, replacing every toggle of $u$ in $W'$ with a paired toggle of type $(u, v)$ (since $\delta$ is now $-s$).

\textbf{Case 3:} $|h(v)|=|h(u)|-1$. Then $h(v)=h(u)-s$. The net change in $\delta$ is $-s \pmod{3}$, which forces $\delta_{\text{start}}=-s$ and $\delta_{\text{end}}=s$. We replace the first $|h(v)|$ toggles of $u$ in $W'$ with paired toggles of type $(u, v)$ (since $\delta$ is initially $-s$). Since paired toggles preserve $\delta$, the value remains $-s$ just before the final toggle of $u$. We perform the final toggle of $u$ as a solo step, which changes $\delta$ from $-s$ to $s$.

In all cases, the constructed walk $W$ is valid and has length $\|h'\|_1 + |h(v)| = \|h\|_1$. Moreover, since every vertex moves monotonically (only increasing or only decreasing), the walk contains no opposite edges. \end{proof}

We are now ready to state the main result of this section, which identifies the distance in $\C_3(T)$ with the minimum norm of a compatible labeling.

\begin{thm}\label{thm:labelings} If $f,g$ are two $3$-colorings of $T$, viewed as vertices in $\C_3(T)$, then 
$$\min_{h\in D(f,g)}\|h\|_1=\dist_{\C_3(T)}(f,g).$$
\end{thm}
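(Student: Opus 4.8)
The plan is to prove the equality by establishing the two inequalities separately, using the two technical lemmas (Lemma~\ref{lem:path-norm} and Lemma~\ref{lem:opposite-edges}) that have just been set up for exactly this purpose.

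For the inequality $\dist_{\C_3(T)}(f,g)\le \min_{h\in D(f,g)}\|h\|_1$, I would simply invoke Lemma~\ref{lem:path-norm}. Let $h\in D(f,g)$ be any compatible labeling achieving the minimum norm. Lemma~\ref{lem:path-norm} produces a walk from $f$ to $g$ in $\C_3(T)$ of length exactly $\|h\|_1$. Since the graph distance is at most the length of any walk connecting the two endpoints, we immediately get $\dist_{\C_3(T)}(f,g)\le \|h\|_1=\min_{h\in D(f,g)}\|h\|_1$. This direction is essentially immediate once the realization lemma is in hand.

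The reverse inequality $\min_{h\in D(f,g)}\|h\|_1\le \dist_{\C_3(T)}(f,g)$ is the substantive direction, and here is where the main obstacle lies. The idea is to take a shortest path $P=(f=c_0,c_1,\ldots,c_r=g)$ in $\C_3(T)$ with $r=\dist_{\C_3(T)}(f,g)$ and extract from it a compatible labeling whose norm is at most $r$. The natural candidate is $h\colonequals \sum_{i=0}^{r-1} h_{(c_i,c_{i+1})}$, the sum of the elementary labelings associated to each directed edge of the path; each summand $h_{(c_i,c_{i+1})}$ is supported on a single toggled vertex with value $\pm 1$. I would first check that this telescoping sum is genuinely $(f,g)$-compatible, i.e.\ $h(v)\equiv g(v)-f(v)\pmod 3$ for every $v$, which follows because the congruence classes add correctly along the path and $h_{(c_i,c_{i+1})}(v)$ records precisely the mod-$3$ change at $v$ on that step. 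I would also need to confirm $h$ is a valid labeling, meaning $|h(u)-h(v)|\le 1$ on each edge; this should be argued from the structural constraints on shortest paths.

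The crux is bounding $\|h\|_1\le r$. Writing $h(v)=\sum_i h_{(c_i,c_{i+1})}(v)$, the triangle inequality gives $|h(v)|\le (\text{number of times } v \text{ is toggled along } P)$, and summing over $v$ yields $\|h\|_1\le r$ with equality exactly when no cancellation occurs at any vertex. The key point is that Lemma~\ref{lem:opposite-edges} guarantees a shortest path contains no opposite edges, so at each vertex all toggles occur in a single direction; hence there is no cancellation, $|h(v)|$ equals the number of toggles at $v$, and $\|h\|_1=r$ exactly. This is precisely the role for which Lemma~\ref{lem:opposite-edges} was proved, so the hard combinatorial work has already been offloaded. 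The main thing to be careful about is verifying that the extracted $h$ is a bona fide labeling (the edge inequality $|h(u)-h(v)|\le 1$); I would derive this by noting that at any moment the colors of adjacent vertices differ, so their accumulated signed toggle counts cannot drift apart by more than $1$ without creating an improper coloring or an opposite edge somewhere. Combining both inequalities yields the claimed equality.
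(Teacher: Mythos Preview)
Your proposal is correct and matches the paper's proof essentially line for line: both directions invoke exactly the two lemmas you name, and the paper likewise forms $h=\sum_{j} h_{(c_j,c_{j+1})}$ from a shortest path and asserts it lies in $D(f,g)$. On your one flagged concern---verifying the edge condition $|h(u)-h(v)|\le 1$---note that this holds even without Lemma~\ref{lem:opposite-edges}: the partial sums $H_j(u)-H_j(v)$ start at $0$, change by at most $1$ per step, and (since every $c_j$ is proper) can never hit the residue class of $f(v)-f(u)\pmod 3$, which confines the difference to two consecutive integers throughout.
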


\begin{proof}
    Suppose that $c_0=f, c_1, \dots, c_r=g$ represents a path of shortest distance in $\C_3(T)$. Each edge $(c_j,c_{j+1})$ in the path corresponds to a single toggle, and recall that $h_{(c_j,c_{j+1})}$ is the labeling which vanishes everywhere except at the vertex being toggled. We can thus write
    $$\dist_{\C_3(T)}(f,g)=\sum_{j=0}^{r-1}\left\|h_{(c_j, c_{j+1})}\right\|_1$$
    where the right-hand side counts the total number of toggles. The path contains no opposite edges by Lemma~\ref{lem:opposite-edges}. Consequently, the toggles at each vertex accumulate monotonically without cancellation, implying
    \[
    \dist_{\C_3(T)}(f,g)=\left\|\sum_{j=0}^{r-1}h_{(c_j,c_{j+1})}\right\|_1.
    \]
    Observe that the sum $\sum_{j=0}^{r-1}h_{(c_j,c_{j+1})}$ is an element of $D(f,g)$. Therefore, 
    \[
    \dist_{\C_3(T)}(f,g)\geq \min_{h\in D(f,g)}\|h\|_1.
    \]
    On the other hand, for any $h\in D(f,g)$ there is a path from $f$ to $g$ of length $\|h\|_1$ by Lemma~\ref{lem:path-norm}. Hence,
    \[
    \dist_{\C_3(T)}(f,g)\leq \min_{h\in D(f,g)}\|h\|_1.
    \]
    Combining the two inequalities yields the desired result.
\end{proof}

In particular, the graph distance between two colorings is obtained by lifting their difference to $\mathbb{Z}$, and then minimizing the $L_1$-distance to $0$ over all such lifts.

Since balanced labelings are precisely the minimizers of the $L_1$-norm in their respective classes $D(f,g)$, we obtain the following useful characterization of the diameter.

\begin{cor}\label{cor:balanced}
    The diameter of $\C_3(T)$ is equal to the maximum value of $\|h\|_1$ as $h$ ranges over all balanced labelings of $T$.
\end{cor}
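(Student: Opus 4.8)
The plan is to combine Theorem~\ref{thm:labelings} with the characterization of balanced labelings as $L_1$-minimizers within each compatibility class $D(f,g)$, and then to observe that as $(f,g)$ ranges over all pairs of colorings, the associated minimizers range over all balanced labelings. Concretely, by definition $\diam(\C_3(T)) = \max_{f,g} \dist_{\C_3(T)}(f,g)$, and Theorem~\ref{thm:labelings} rewrites the inner distance as $\min_{h \in D(f,g)} \|h\|_1$. The excerpt already records that for $h \in D(f,g)$, being balanced is equivalent to minimizing $\|h\|_1$ over $D(f,g)$, so $\min_{h\in D(f,g)}\|h\|_1 = \|h^\ast\|_1$ for any balanced $h^\ast \in D(f,g)$. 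Thus
\[
\diam(\C_3(T)) = \max_{f,g}\ \min_{h \in D(f,g)} \|h\|_1 = \max_{f,g}\ \bigl\{\|h\|_1 : h \in D(f,g) \text{ balanced}\bigr\}.
\]

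The key remaining point is to pass from this maximum over pairs $(f,g)$ to a maximum over \emph{all} balanced labelings of $T$, and this requires two containments. First I would note that every quantity appearing on the right is $\|h\|_1$ for some balanced labeling $h$, so the right-hand side is at most $\max\{\|h\|_1 : h \text{ balanced}\}$. For the reverse inequality, I would invoke Lemma~\ref{lem:homs-representable}: given \emph{any} balanced labeling $h$ of $T$, there exist colorings $f,g$ with $h \in D(f,g)$. Since $h$ is balanced it is an $L_1$-minimizer in its class $D(f,g)$, so $\min_{h'\in D(f,g)}\|h'\|_1 = \|h\|_1$, and therefore $\dist_{\C_3(T)}(f,g) = \|h\|_1 \le \diam(\C_3(T))$. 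This holds for every balanced labeling, giving $\max\{\|h\|_1 : h \text{ balanced}\} \le \diam(\C_3(T))$ and completing the proof.

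The two honest technical checks are that the maximum over balanced labelings is actually attained (i.e.\ the set of balanced labelings produces a finite supremum), and that no edge case breaks the correspondence. The first is immediate because the balanced condition forces the labels to cluster near $0$: a balanced labeling cannot be shifted by $3$ in either direction without increasing its norm, which bounds each $|h(v)|$ in terms of $n$, so there are only finitely many balanced labelings and the maximum exists. The second check is purely bookkeeping, since both Lemma~\ref{lem:homs-representable} and the stated equivalences apply to arbitrary labelings and arbitrary pairs.

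I do not expect a genuine obstacle here, as the corollary is essentially an unwinding of Theorem~\ref{thm:labelings} together with Lemma~\ref{lem:homs-representable}; the only subtlety worth flagging explicitly is that representability (Lemma~\ref{lem:homs-representable}) is exactly what guarantees surjectivity of the map $h \mapsto (f,g)$ onto compatibility classes, so that \emph{every} balanced labeling is genuinely realized as the minimizer for some pair and hence contributes a distance bounded by the diameter. Making that surjectivity explicit is the one place where I would be careful to cite the earlier lemma rather than treat it as obvious.
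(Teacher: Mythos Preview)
Your proposal is correct and matches the paper's (implicit) argument: the paper states the corollary without proof, preceded only by the remark that balanced labelings are precisely the $L_1$-minimizers in their classes $D(f,g)$, so the intended derivation is exactly the combination of Theorem~\ref{thm:labelings}, that equivalence, and Lemma~\ref{lem:homs-representable} that you spell out. Your explicit flagging of Lemma~\ref{lem:homs-representable} for the surjectivity direction is the one step the paper leaves to the reader, and you have it right.
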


\section{Median values of balanced labelings}\label{sec:median}

In this section, we bound the median of a balanced labeling that attains the maximum possible $L_1$-norm. Our approach is to track how $\|h+x\|_1$ changes under integer shifts, combining this with the convexity of $\Phi(x)=\|h+x\|_1$ and the balanced condition \eqref{eq:convex-balanced}.

Recall that a balanced labeling of a tree $T$ is a function $h\colon V\to \mathbb{Z}$ satisfying two conditions:
\begin{itemize}
    \item $|h(u)-h(v)|\leq 1$ for each edge $uv\in E(T)$.
    \item $\|h\|_1 \le \|h+3m\|_1$ for all $m\in \Z$.
\end{itemize}
The second bullet point motivates us to understand the behavior of $h+3m$ as $m$ varies. We analyze these shifts using the function $\Phi(x) \colonequals \|h+x\|_1 = \sum_{v\in V} |h(v)+x|$. Since $\Phi$ is a sum of convex functions $x \mapsto |c+x|$, $\Phi$ is convex. Since $h$ is integer-valued, the multiset of labels $\{h(v)\}$ has the median $M\in \frac{1}{2} \mathbb{Z}$. The following notation helps with convexity arguments.

\textbf{Notation.} Given a labeling $h\colon V\to \Z$, consider it as a multiset $\{h(v)\}$ of labels. Define the following quantities:
\begin{itemize}
    \item $a_i$ is the number of vertices with label equal to $i$.
    \item $A_{\geq t}$ (resp. $A_{\leq t}$) is the number of vertices with label at least $t$ (resp. at most $t$).
    \item $\Delta_k \colonequals \Phi(k+1)-\Phi(k)$ represents the forward difference (slope) at $x=k$.
\end{itemize}
We observe that
\[
\Delta_k = \#\{v : h(v)+k \ge 0\} - \#\{v : h(v)+k < 0\} = A_{\ge -k} - (n - A_{\ge -k}) = 2A_{\ge -k} - n.
\]
The identities $\Delta_k = 2A_{\geq -k}-n$, and equivalently, $A_{\geq -k} = \frac{\Delta_k+n}{2}$ are useful in our proofs. The following lemma establishes a connection between the median of the labels of $h$ and minimizers of the function $\Phi(x)$. 

\begin{lem}[Median and minimizer]
\label{lem:minimizer}
Let $M$ be the median of the labels of $h$. An integer $k \in \Z$ minimizes $\Phi(k)$ if and only if $-k\in [M-\frac{1}{2}, M+\frac{1}{2}]$. In particular, if $|M| < 1$, then $0$ is a global minimizer of $\Phi$.
\end{lem}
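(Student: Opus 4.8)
The plan is to use the forward-difference description $\Delta_k = 2A_{\geq -k} - n$ together with the fact that a convex integer-valued function $\Phi$ is minimized at $k$ precisely when its slope changes sign there, i.e. when $\Delta_{k-1} \le 0 \le \Delta_k$. Translating this sign condition into a statement about $A_{\geq t}$ and then into a statement about the median $M$ is the core of the argument.

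First I would recall that since $\Phi$ is convex and piecewise linear with integer breakpoints, an integer $k$ is a global minimizer if and only if the slope is nonpositive just before $k$ and nonnegative just after, that is, $\Phi(k) \le \Phi(k-1)$ and $\Phi(k) \le \Phi(k+1)$. In forward-difference notation this reads $\Delta_{k-1} \le 0 \le \Delta_k$. Using the identity $\Delta_k = 2A_{\geq -k} - n$, the right inequality $\Delta_k \ge 0$ becomes $A_{\geq -k} \ge n/2$, while the left inequality $\Delta_{k-1} \le 0$ becomes $A_{\geq -k+1} \le n/2$, equivalently $A_{\leq -k} \ge n/2$ (since $A_{\geq -k+1} = n - A_{\leq -k}$). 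So $k$ minimizes $\Phi$ exactly when the value $t = -k$ satisfies both $A_{\geq t} \ge n/2$ and $A_{\leq t} \ge n/2$.

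Next I would verify that the condition ``$A_{\geq t} \ge n/2$ and $A_{\leq t} \ge n/2$'' is precisely the defining property of an integer lying in the median interval $[M - \tfrac12, M + \tfrac12]$. For a multiset of $n$ integers with sorted values $x_{(1)} \le \cdots \le x_{(n)}$, the median is $M = x_{((n+1)/2)}$ when $n$ is odd and $M = \tfrac12(x_{(n/2)} + x_{(n/2+1)})$ when $n$ is even; in either case an integer $t$ satisfies $A_{\geq t} \ge n/2$ and $A_{\leq t} \ge n/2$ if and only if $t \in [M - \tfrac12, M+\tfrac12]$. I would handle the two parities in parallel, checking that in the odd case the unique such integer is $t = M$ (so $-k = M$), and in the even case the integers $t$ with $x_{(n/2)} \le t \le x_{(n/2+1)}$ are exactly those in $[M-\tfrac12, M+\tfrac12]$. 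This gives the equivalence $k$ minimizes $\Phi \iff -k \in [M-\tfrac12, M+\tfrac12]$.

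The main obstacle is getting the boundary bookkeeping exactly right: the strict-versus-nonstrict inequalities in the definition of $\Delta_k$ (the problem uses $h(v)+k \ge 0$ versus $h(v)+k < 0$), the relationship between $A_{\geq t}$, $A_{\leq t}$, and the half-integer median, and making sure the endpoints $M \pm \tfrac12$ are included correctly so that the interval is closed. The final clause is then immediate: if $|M| < 1$ then $0 \in [M - \tfrac12, M + \tfrac12]$ is impossible to conclude directly unless one notes that $|M| < 1$ forces $M \in \{-\tfrac12, 0, \tfrac12\}$ (as $M \in \tfrac12\mathbb{Z}$), and in each of these cases $-0 = 0 \in [M-\tfrac12, M+\tfrac12]$, so $k = 0$ is a minimizer.
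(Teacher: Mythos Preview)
Your approach is essentially identical to the paper's: both reduce to the slope condition $\Delta_{k-1}\le 0\le \Delta_k$, rewrite it as $A_{\ge -k+1}\le n/2\le A_{\ge -k}$, and then identify this with $-k\in[M-\tfrac12,M+\tfrac12]$. The paper simply asserts this last equivalence in one line, whereas you attempt to justify it by a parity split.

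There is one genuine bookkeeping slip in your even-$n$ case. You claim that the integers $t$ with $x_{(n/2)}\le t\le x_{(n/2+1)}$ are exactly those in $[M-\tfrac12,M+\tfrac12]$, but this is false for arbitrary multisets: if $x_{(n/2)}=1$ and $x_{(n/2+1)}=5$ then $M=3$ and $[M-\tfrac12,M+\tfrac12]$ contains only $t=3$, yet every $t\in\{1,\dots,5\}$ minimizes $\Phi$. The equivalence you want holds here only because $h$ is a \emph{labeling}, so its image is a contiguous interval and hence $x_{(n/2+1)}-x_{(n/2)}\le 1$; you should invoke this explicitly. With that caveat in place, your argument goes through and matches the paper's.
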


\begin{proof}
By convexity, $k$ minimizes $\Phi$ if and only if $\Delta_{k-1} \le 0 \le \Delta_k$. Using the formula for $\Delta_k$ above, this is equivalent to
\[
2A_{\ge -(k-1)} - n \le 0 \le 2A_{\ge -k} - n \quad \iff \quad A_{\ge -k+1} \le n/2 \le A_{\ge -k}.
\]
This is equivalent to the condition that $-k\in \left[M-\frac{1}{2}, M+\frac{1}{2}\right]$. \end{proof}

We also record a simple property of convex functions restricted to a lattice.

\begin{lem}
\label{lem:lattice}
Let $f\colon \Z \to \R$ be convex with a global minimum at $0$. Let $L \subset \Z$ be a lattice (i.e., an arithmetic progression) such that $0 \notin L$, containing points strictly less than and strictly greater than $0$. Then $\min_{x \in L} f(x)$ is attained at one of the two lattice points $u < 0 < v$ of $L$ adjacent to $0$.
\end{lem}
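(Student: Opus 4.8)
The plan is to exploit convexity directly, showing that on the lattice $L$ the function $f$ decreases monotonically as we move toward $0$ from either side, so the minimum over $L$ must occur at whichever neighbor of $0$ in $L$ we reach last. First I would set up notation: let $u < 0 < v$ be the two lattice points of $L$ adjacent to $0$, so that every point of $L$ below $u$ is of the form $u - jd$ for the common difference $d>0$ and $j\geq 1$, and symmetrically every point above $v$ is $v + jd$. The goal reduces to showing $f(u-jd) \geq f(u)$ for all $j\geq 1$ and $f(v+jd)\geq f(v)$ for all $j\geq 1$; combined with the fact that $L\subset \{u-jd\} \cup \{u,v\} \cup \{v+jd\}$, this yields $\min_{x\in L} f(x) = \min(f(u), f(v))$, which is attained at one of the two adjacent points.

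For the monotonicity on the negative side, the key tool is that a convex function is nondecreasing as one moves away from a global minimizer. Since $0$ is a global minimizer and $u < 0$, I would argue that $f$ is nonincreasing on $(-\infty, 0]\cap\Z$: for integers $p < q \leq 0$, convexity together with $f(0)\leq f(q)$ forces $f(p)\geq f(q)$ (otherwise the three points $p, q, 0$ would violate the convexity inequality, since a strictly decreasing-then-increasing pattern to the left of the minimum is impossible). Applying this with $p = u-jd$ and $q = u$ gives $f(u-jd)\geq f(u)$. The symmetric argument on $[0,\infty)\cap\Z$, using that $f$ is nondecreasing there, gives $f(v+jd)\geq f(v)$. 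I would phrase the monotonicity cleanly via forward differences: convexity means the forward differences $f(x+1)-f(x)$ are nondecreasing in $x$, they are $\leq 0$ for $x < 0$ and $\geq 0$ for $x\geq 0$ (because $0$ is a minimizer), and summing consecutive differences over a range to the left of $0$ shows the function only grows as we go further left.

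I do not expect a genuine obstacle here, as the statement is an elementary consequence of convexity; the main care is in handling the discreteness correctly. The one point that requires a touch of attention is making sure the monotonicity argument uses only that $0$ is a \emph{global} minimizer over $\Z$ (not that it is unique), so that the inequalities are weak and the conclusion "attained at one of $u,v$" is stated with the appropriate $\min$. A clean way to package everything is to invoke the standard fact that for a convex $f\colon\Z\to\R$ the slope sequence $\Delta_x \colonequals f(x+1)-f(x)$ is nondecreasing, deduce $\Delta_x\leq 0$ for $x\leq -1$ and $\Delta_x\geq 0$ for $x\geq 0$, and then write $f(u-jd)-f(u) = \sum_{i} \Delta_i$ over the relevant negative range (a sum of nonpositive terms traversed in reverse, hence nonnegative) and likewise on the right. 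This telescoping presentation keeps the argument short and avoids any case analysis.
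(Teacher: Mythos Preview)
Your proposal is correct and follows essentially the same approach as the paper: observe that convexity with a global minimum at $0$ forces $f$ to be nonincreasing on $(-\infty,0]$ and nondecreasing on $[0,\infty)$, so every lattice point beyond $u$ or $v$ has value at least $f(u)$ or $f(v)$ respectively. The paper states this monotonicity in one line without the forward-difference machinery, but your more detailed telescoping argument is a perfectly valid way to justify the same fact.
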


\begin{proof}
Note that $f$ is non-increasing on $(-\infty, 0]$ and non-decreasing on $[0, \infty)$. For any $z \in L$ with $z < u$, $f(z) \ge f(u)$; for $z > v$, $f(z) \ge f(v)$. Thus, the minimum is at $u$ or $v$.
\end{proof}

The next lemma is the main result of this section. This lemma helps narrow down the search for balanced labelings of maximum norm.

\begin{lem}\label{lem:median-control}
Suppose $h$ is a balanced labeling of $T$ with maximum $L_1$-norm (among all balanced labelings of $T$). Let $M$ be the median of the labels of $h$. Then $M\in \{1, \frac{3}{2}, 2\}$.
\end{lem}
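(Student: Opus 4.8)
The plan is to translate both the balanced condition and the maximality hypothesis into statements about the convex function $\Phi(x)=\|h+x\|_1$ and its forward differences $\Delta_k=2A_{\geq -k}-n$, and then to play these off against the rigidity coming from the fact that the image of $h$ is an \emph{interval}: every integer between the smallest and largest label is attained, so $a_i\geq 1$ throughout the support. By Lemma~\ref{lem:minimizer} the global minimizer of $\Phi$ over $\Z$ sits at $-M$, and since replacing $h$ by $-h$ preserves both the edge condition and the norm while sending $M\mapsto -M$, I may assume $M\geq 0$ (so the conclusion $M\in\{1,\tfrac32,2\}$ should be read as $|M|\in\{1,\tfrac32,2\}$ before this normalization). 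It then suffices to prove the two bounds $1\le M\le 2$.

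For the upper bound $M\le 2$, I would use only the balanced inequality $\Phi(0)\le\Phi(-3)$, which by telescoping is exactly $\Delta_{-1}+\Delta_{-2}+\Delta_{-3}\le 0$. If $M\ge \tfrac52$, then $-M\le -2$, so the minimizer of $\Phi$ lies at $k^\ast\le -2$; by convexity $\Delta_k\ge 0$ for all $k\ge k^\ast$, which forces $\Delta_{-1}+\Delta_{-2}+\Delta_{-3}\ge 0$ (with the boundary case $M=\tfrac52$ handled by noting that then $-3$ is also a minimizer, so $\Delta_{-3}=0$). Hence all three differences vanish, i.e.\ $A_{\ge 1}=A_{\ge 2}=A_{\ge 3}=n/2$, so $a_1=a_2=0$. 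But $M\ge\tfrac52$ guarantees labels $\ge 3$, while $A_{\ge 1}=n/2<n$ guarantees labels $\le 0$; the interval property then forces $a_1,a_2\ge 1$, a contradiction.

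The lower bound $M\ge 1$ is the crux, and the only place the maximality hypothesis is used. Suppose $0\le M<1$. Then $|M|<1$, so by Lemma~\ref{lem:minimizer} the value $0$ is a \emph{global} minimizer of $\Phi$, i.e.\ $\|h\|_1=\min_x\Phi(x)$. Now consider the balanced representatives of the other two residue classes modulo $3$: these are genuine balanced labelings of $T$, so by maximality their norms are at most $\|h\|_1$. By Lemma~\ref{lem:lattice}, applied to the arithmetic progressions $1+3\Z$ and $2+3\Z$ (neither containing $0$), each such minimum is attained at a neighbor of $0$ in its class, namely in $\{1,-2\}$ and $\{-1,2\}$ respectively. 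Thus $\min(\Phi(1),\Phi(-2))\le\Phi(0)$ and $\min(\Phi(-1),\Phi(2))\le\Phi(0)$; but since $\Phi(0)$ is the global minimum, each of these four values is also $\ge\Phi(0)$, forcing equality in one term of each pair. Any such equality means $\Phi$ is flat across a unit step adjacent to $0$, i.e.\ some $\Delta_k=0$ with $k\in\{-2,-1,0,1\}$, and a short case check shows that every combination yields a vanishing label count $a_i=0$ for some $i$ strictly inside the support (for instance $\Delta_0=\Delta_{-1}$ gives $a_0=0$). Because $\Phi(0)$ being the global minimum also forces labels on both sides of that index, this contradicts the interval property.

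Combining the two bounds with $M\in\tfrac12\Z$ gives $M\in\{1,\tfrac32,2\}$. I expect the lower bound to be the main obstacle: it is where maximality must be converted into a usable statement, and doing so requires comparing $h$ simultaneously against the balanced representatives of both nonzero residue classes and then extracting a forbidden gap in the label multiset. The bookkeeping with half-integer medians and with the ties in Lemmas~\ref{lem:minimizer} and~\ref{lem:lattice} (which produce the possibility $M=\tfrac32$ and the boundary $M=\tfrac52$) is the delicate part that must be handled with care.
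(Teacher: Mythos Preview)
Your proposal is correct, and for the lower bound you take a genuinely different route from the paper. Both arguments start from the same setup ($\Phi(x)=\|h+x\|_1$, $\Delta_k=2A_{\ge -k}-n$, the symmetry $h\mapsto -h$, and Lemma~\ref{lem:minimizer} placing the global minimizer of $\Phi$ at $0$ when $|M|<1$), but they exploit maximality differently. The paper argues that any shift with $\|h+k\|_1>\|h\|_1$ must itself be \emph{unbalanced}, and extracts from this strict inequalities of the form $\Phi(-2)<\Phi(1)$ and $\Phi(2)<\Phi(-1)$ (or, in the flat-minimum case, the contradictory pair $\Phi(-2)<\Phi(1)$ and $\Phi(1)<\Phi(-2)$); this requires splitting into the strict-minimum and flat-minimum sub-cases. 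Your argument instead compares $h$ directly against the \emph{balanced} representatives of the two nonzero residue classes: maximality forces $\min(\Phi(1),\Phi(-2))\le\Phi(0)$ and $\min(\Phi(-1),\Phi(2))\le\Phi(0)$, and since $\Phi(0)$ is the global minimum these become equalities, yielding two consecutive vanishing slopes $\Delta_{k}=\Delta_{k-1}=0$ in every one of the four combinations. This is clean and avoids the strict/flat dichotomy, at the cost of leaning on the interval property in all cases (the paper only needs it in the flat case). Your ``short case check'' is genuinely short---each of the four combinations produces some $a_i=0$ with $A_{\ge i+1}=A_{\le i-1}=n/2>0$---but you should spell it out rather than leave it implicit.

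For the upper bound, both arguments start from the balanced condition $\Phi(0)\le\Phi(-3)$. The paper expands this directly to bound $A_{\ge 3}\le n/2$, while you telescope it as $\Delta_{-1}+\Delta_{-2}+\Delta_{-3}\le 0$ and combine with convexity to force all three differences to vanish when $M\ge\tfrac52$, again finishing via the interval property. Your treatment of the boundary $M=\tfrac52$ (where only $k^\ast\in\{-3,-2\}$ and one must note $\Delta_{-3}=0$ separately) is in fact more careful than the paper's, which glosses over this half-integer case.
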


\begin{proof} By symmetry (replacing $h$ with $-h$ if necessary), we assume $M \ge 0$. Recall that $a_i$ denotes the number of vertices with label $i$.

\medskip\noindent
\textbf{Upper Bound ($M \le 2$).}
Since $h$ is balanced, we have $\Phi(0) \le \Phi(-3)$. We expand the difference 
\[
\Phi(0) - \Phi(-3) = \sum_{i} a_i (|i| - |i-3|)\leq 0
\]
using:
\begin{itemize}[noitemsep]
    \item $i \le 0 \implies |i| - |i-3| = -3$; \quad $i \ge 3 \implies |i| - |i-3| = 3$.
    \item $i = 1 \implies |i| - |i-3| = -1$; \quad $i = 2 \implies |i| - |i-3| = 1$;
\end{itemize}
Thus, $-3A_{\le 0} - a_1 + a_2 + 3A_{\ge 3} \le 0$. Substituting $A_{\le 0} = n - a_1 - a_2 - A_{\ge 3}$ yields:
\[
-3n + 2a_1 + 4a_2 + 6A_{\ge 3} \le 0 \implies 6A_{\ge 3} \le 3n - 2a_1 - 4a_2 \le 3n.
\]
This implies $A_{\ge 3} \le n/2$. If $M > 2$, then $M \ge 3$ and so $A_{\ge 3} > n/2$, a contradiction.

\medskip\noindent
\textbf{Lower Bound ($M \ge 1$).}
Suppose $|M| < 1$. By Lemma \ref{lem:minimizer}, we know that $x=0$ is a global minimizer of $\Phi$, so slopes satisfy $\Delta_{-1} \le 0 \le \Delta_0$. By maximality, if $\|h+k\|_1 > \|h\|_1$, then $h+k$ is unbalanced.

\smallskip
\textbf{Case 1:} Strict minimum ($\Delta_{-1} < 0 < \Delta_0$).
Since $\|h\pm 1\|_1 > \|h\|_1$, both $h+1$ and $h-1$ are unbalanced.
For $h+1$ (on lattice $3\Z+1$ straddling $-2$ and $1$):
\[
\text{$h+1$ unbalanced} \implies \min(\Phi(-2), \Phi(1)) < \Phi(1) \implies \Phi(-2) < \Phi(1).
\]
For $h-1$ (on lattice $3\Z-1$ straddling $-1$ and $2$):
\[
\text{$h-1$ unbalanced} \implies \min(\Phi(-1), \Phi(2)) < \Phi(-1) \implies \Phi(2) < \Phi(-1).
\]
Summing these gives $\Phi(1) - \Phi(2) > \Phi(-2) - \Phi(-1)$, or $\Delta_1 < \Delta_{-2}$, contradicting convexity.

\smallskip
\textbf{Case 2:} Flat minimum (at least one slope is 0).
By symmetry ($h \mapsto -h$), we may assume $\Delta_{-1} = 0 \le \Delta_0$. This implies $A_{\ge 1} = (\Delta_{-1}+n)/2 = n/2$.
\begin{itemize}
    \item If $\Delta_0 = 0$, then $A_{\ge 0} = n/2$, so $a_0 = 0$. Since $A_{\ge 1} = n/2 > 0$ and $A_{\le -1} = n-A_{\geq 1} > 0$, the label set is \emph{not} contiguous, a contradiction.
    
    \item If $\Delta_0 > 0$, then $a_0 > 0$ and $a_1 > 0$. We compute the relevant slopes:
    \begin{align*}
        \Delta_0 &= 2A_{\ge 0} - n = 2(A_{\ge 1} + a_0) - n = 2a_0 > 0, \\ 
        \Delta_{-2} &= 2A_{\ge 2} - n = 2(A_{\ge 1} - a_1) - n = -2a_1 < 0.
    \end{align*}
    Thus, $\|h+1\|_1 > \|h\|_1$ and $\|h-2\|_1 > \|h\|_1$. By maximality, both are unbalanced on the lattice $3\Z+1$. Applying Lemma~\ref{lem:lattice}, we obtain:
    \[
       \Phi(-2) < \Phi(1) \quad (\text{from } h+1) 
       \quad \text{and} \quad
       \Phi(1) < \Phi(-2) \quad (\text{from } h-2),
    \]
    a contradiction.
\end{itemize}
Thus, the lower bound $|M| \ge 1$ holds. \end{proof}

\section{Maximum 3-coloring diameter: path graph}\label{sec:maximum-3-coloring-diameter}

Let $I_n \colonequals [2-\lceil n/2\rceil, \lfloor n/2\rfloor+1]\cap\mathbb{Z}$ denote the set of $n$ consecutive integers. We begin by showing that the path graph is the unique tree admitting a labeling by this set.

\begin{lem}\label{lem:path-unique}
If a tree $T$ on $n$ vertices admits a labeling $h\colon V(T)\to \mathbb{Z}$ whose image is exactly the set $I_n$, then $T \cong P_n$.
\end{lem}

\begin{proof}
Let $P_n$ be the path graph on vertex set $I_n$ with edges between consecutive integers. Since the image of $h$ is $I_n$ and $|h(u)-h(v)|\le 1$ for each edge $uv$ of $T$, the map $h$ is a graph homomorphism from $T$ to $P_n$ that is bijective on vertices. Thus $T$ is a spanning subgraph of $P_n$. Since $P_n$ is a tree, its only connected spanning subgraph is $P_n$ itself. Thus $T \cong P_n$.
\end{proof}

The path graph $P_n$ admits such a labeling by assigning the integers of $I_n$ sequentially along the path. For $n=7$, we have $I_7 = \{-2, -1, 0, 1, 2, 3, 4\}$, as illustrated below:

\tikzset{
  smvtx/.style={circle,fill=black,inner sep=1.3pt},
  smedge/.style={line width=0.9pt},
  lab/.style={font=\scriptsize, inner sep=1pt}
}

\begin{center}
\begin{tikzpicture}[scale=1.4]
  \node[smvtx] (v0) at (0,0) {};
  \node[smvtx] (v1) at (1,0) {};
  \node[smvtx] (v2) at (2,0) {};
  \node[smvtx] (v3) at (3,0) {};
  \node[smvtx] (v4) at (4,0) {};
  \node[smvtx] (v5) at (5,0) {};
  \node[smvtx] (v6) at (6,0) {};

  \draw[smedge] (v0)--(v1);
  \draw[smedge] (v1)--(v2);
  \draw[smedge] (v2)--(v3);
  \draw[smedge] (v3)--(v4);
  \draw[smedge] (v4)--(v5);
  \draw[smedge] (v5)--(v6);

  \node[lab, above=3pt of v0] {$-2$};
  \node[lab, above=3pt of v1] {$-1$};
  \node[lab, above=3pt of v2] {$0$};
  \node[lab, above=3pt of v3] {$1$};
  \node[lab, above=3pt of v4] {$2$};
  \node[lab, above=3pt of v5] {$3$};
  \node[lab, above=3pt of v6] {$4$};
\end{tikzpicture}
\end{center}

\begin{thm}\label{thm:path} For every tree $T$ on $n\geq 7$ vertices, the diameter of $\C_3(T)$ is at most $$\binom{\lceil\frac{n}{2}\rceil -1}{2}+\binom{\lfloor\frac{n}{2}\rfloor+2}{2}$$
and equality is achieved if and only if $T$ is isomorphic to the path graph $P_n$.\end{thm}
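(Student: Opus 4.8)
The plan is to pass through Corollary~\ref{cor:balanced} and Lemma~\ref{lem:median-control} to turn the statement into a purely combinatorial extremal problem about label multisets, and then to locate the optimum. By Corollary~\ref{cor:balanced}, $\diam(\C_3(T)) = \max_h \|h\|_1$ over balanced labelings $h$ of $T$. Since $\|h\|_1$, the balanced condition, and the contiguity of the image all depend only on the multiset $\{h(v)\}$, any labeling of any tree produces a balanced multiset with contiguous support and $\sum_i a_i = n$; hence $\diam(\C_3(T))$ is bounded above by the maximum of $\sum_i a_i|i|$ over all such multisets, and the tree will re-enter only through the equality case via Lemma~\ref{lem:path-unique}. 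I would fix a maximizing multiset $h$ and, replacing $h$ by $-h$ if needed, assume its median satisfies $M\ge 0$, so $M\in\{1,\tfrac32,2\}$ by Lemma~\ref{lem:median-control}. Using $\Delta_k = 2A_{\ge -k}-n$, the balanced condition $\Phi(0)\le\Phi(\pm 3)$ of \eqref{eq:convex-balanced} translates into the two slope inequalities
\[
A_{\ge 1}+A_{\ge 2}+A_{\ge 3}\le \tfrac{3n}{2},\qquad A_{\le -1}+A_{\le -2}+A_{\le -3}\le \tfrac{3n}{2},
\]
which I use throughout. A first consequence, together with the median bound, is that the support of a maximizer lies in a bounded interval, so the optimization is finite and a maximizer exists.

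The heart of the argument is to show that a norm-maximizing balanced multiset uses each label at most once, so that its image is an interval of exactly $n$ consecutive integers (a \emph{window} $[L,R]$ with $R-L=n-1$). I would prove this by an exchange argument: if some value $j$ is repeated, I move one copy to a new extreme label ($R+1$ or $L-1$), which strictly increases $\|h\|_1$. A copy of an outer value $|j|\ge 3$ can always be pushed further out in the same direction without changing either slope sum, so at the maximum no outer label repeats. For a repeat at a central value $j\in\{-2,-1,0,1,2\}$, pushing it outward changes one of the two slope sums by at most $3$; the key point is that the two slope inequalities cannot both be tight, since adding them and using that $A_{\ge t}$ and $A_{\le -t}$ count disjoint sets would force the support to avoid $[-2,2]$, contradicting contiguity. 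Thus at least one side always has slack into which a central repeat can be absorbed, and every repeat can be eliminated while increasing the norm. Consequently the maximizer is a window.

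Once the maximizer is a window $[L,R]$ of length $n$, the problem is one-dimensional. The norm $\sum_{i=L}^{R}|i|$ is a convex function of $L$, smallest when the window is centered at $0$ and increasing as the window slides away from $0$; among windows with $M\ge 0$ I therefore seek the rightmost one that remains balanced. Checking the slope inequalities shows the window stays balanced up to median $M=1$ (for $n$ odd) or $M=\tfrac32$ (for $n$ even) and fails the first inequality once pushed to median $2$, so the extremal balanced window is $I_n=[\,2-\lceil n/2\rceil,\ \lfloor n/2\rfloor+1\,]$. Its norm is
\[
\sum_{i=2-\lceil n/2\rceil}^{\lfloor n/2\rfloor+1}|i|=\binom{\lceil n/2\rceil-1}{2}+\binom{\lfloor n/2\rfloor+2}{2},
\]
which yields the claimed upper bound. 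For equality, a norm-maximizing labeling must have image exactly $I_n$, so by Lemma~\ref{lem:path-unique} its underlying tree is $P_n$; conversely, labeling $P_n$ by $I_n$ is balanced and attains the bound, giving $\diam(\C_3(P_n))$ equal to it. The hypothesis $n\ge 7$ enters in guaranteeing that the maximizer is forced to be $I_n$ rather than one of the sporadic ties that occur for small $n$ (recorded in Appendix~\ref{sec:small-trees}).

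I expect the exchange argument of the second paragraph to be the main obstacle. The outer repeats are immediate, but removing a repeated central value requires choosing which copy to move and in which direction so that the slack side of the two slope inequalities absorbs the change, and organizing this uniformly for all $n\ge 7$ under the normalization $M\ge 0$ is where the bookkeeping is most delicate; in particular one must avoid naively bounding the positive and negative parts separately, which double-counts the shared budget of $n$ vertices and is too lossy. Once the reduction to windows is secured, the one-parameter computation and the final appeal to Lemma~\ref{lem:path-unique} are routine.
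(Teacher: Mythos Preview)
Your overall strategy---relax to multisets, argue the maximizer must be a \emph{window}, then optimize over windows---is sound in spirit and genuinely different from the paper's direct decomposition of $S_{\max}-\|h\|_1$. The window optimization and the appeal to Lemma~\ref{lem:path-unique} are fine. The gap is exactly where you suspect: the exchange argument.

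The problem is that ``at least one side has slack'' is not the same as ``the side that increases the norm has enough slack.'' For a repeated label $j\in\{1,2\}$ you must push to $R{+}1$ to \emph{strictly} increase $\|h\|_1$ (pushing to $L{-}1$ can leave the norm unchanged, e.g.\ $j=2$, $L{-}1=-2$), but the positive slope sum may lack the required slack. Concretely, for $n=7$ the balanced multiset $\{-1,0,1,2,2,2,3\}$ has median $2$, positive slope sum $10$ against the bound $10.5$, and pushing a $2$ to $R{+}1=4$ raises the sum to $11>10.5$; pushing to $L{-}1=-2$ preserves balance but keeps the norm at $11$. So a single push gives no contradiction. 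Your ``not both tight'' observation only guarantees slack \emph{somewhere}, not slack of the right size on the norm-increasing side. (A two-step sequence does reach $I_7$ here, but then you are no longer proving that each step strictly increases the norm, and you need a different invariant.)

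The paper sidesteps this by never isolating a single exchange. It writes $S_{\max}-\|h\|_1$ as a sum of three nonnegative pieces---two ``flattening'' terms and a residual window term---and treats $M\in\{1,\tfrac32\}$ and $M=2$ separately. The $M=2$ case is exactly where your exchange stalls: the paper combines the median inequality with the balanced inequality $\|h\|_1\le\|h-3\|_1$ to force $a_2>a_1$ and $\sum_{i\le 0}a_i>\sum_{i\ge 3}a_i$, and these extra constraints make the residual term strictly positive (so $M=2$ never attains equality). If you want to salvage the exchange route, you will need an analogous two-inequality input in the $M=2$ case; at that point the argument essentially converges to the paper's.
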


\begin{proof}

Let $h\colon V(T)\to \mathbb{Z}$ be a balanced labeling of maximum $L_1$-norm, and let $M$ be the median of its labels. By Lemma~\ref{lem:median-control}, we have $M\in \{1, \frac{3}{2}, 2\}$. As before, let $a_i$ be the multiplicity of each label $i$. Define 
\[
S_{\max} \colonequals \sum_{x\in I_n} |x| = \binom{\lceil n/2\rceil -1}{2} + \binom{\lfloor n/2\rfloor+2}{2}.
\]
We show that $\|h\|_1 \le S_{\max}$, with equality if and only if the image of $h$ is exactly $I_n$. We proceed according to two cases depending on the value of $M$.

\textbf{Case 1:} $M=1$ or $M=\frac{3}{2}$.

In either case, at least $\lceil n/2\rceil$ labels are $\le 1$ and at least $\lceil n/2\rceil$ are $\ge 1$. Let $p = \sum_{i\ge 2} a_i$ and $q = \sum_{i\le 0} a_i$. Then $p \le \lfloor n/2 \rfloor$ and $q \le \lceil n/2 \rceil - 1$. We decompose the difference as follows:

\begin{align*}
S_{\max}-\|h\|_1&=\left(\sum_{2-\lceil \frac{n}{2}\rceil\le i\le \lfloor\frac{n}{2}\rfloor+1}|i|\right)-\sum_{i}|i|a_i  \\
&=\left(\sum_{i=2}^{p+1} i-\sum_{i\geq 2}ia_i \right)+\left(\sum_{i=1-q}^{0} |i|-\sum_{i\le 0}|i|a_i \right)+\left(\sum_{\substack{2-\lceil \frac{n}{2}\rceil\le i\le -q \\ 
\text{or} \\
p+2\le i\le \lfloor\frac{n}{2}\rfloor+1}} (|i|-1) \right).
\end{align*}
 We claim that the first two summands are always nonnegative, with equality if and only if $a_i=1$ in the appropriate intervals. To see this, imagine ``flattening'' the labels that have values $\geq 2$ by replacing each label of multiplicity more than one with the next smallest value that is not achieved by a label. The second summand can be handled in the same way by flattening the values towards the negative side. The final summand is nonnegative for $n\geq 7$ because $I_n$ is wide enough: the only term that could possibly be negative is $(|0|-1)$ which appears when $q=0$. However, $2-\lceil\frac{n}{2}\rceil\le -2$, so all three terms $(|-2|-1)+(|-1|-1)+(|0|-1)$ combined give a nonnegative contribution. 
 
\textbf{Case 2:} $M=2$.

Since $2$ is the median, it follows that $\sum_{i\geq 2}a_i>\sum_{i\le 1}a_i$, and therefore we obtain
\begin{equation}\label{ineq:path-1}
a_2 + \sum_{i\geq 3}a_i > a_1 + \sum_{i\le 0} a_i \, .
\end{equation}
 Moreover, since $h$ is a balanced labeling, we have $\|h\|_1\le \|h-3\|_1$ which yields
\begin{equation}\label{ineq:path-2}
a_2+3\sum_{i\geq 3}a_i\le a_1+3\sum_{i\le 0}a_i \, .
\end{equation}
The inequalities~\eqref{ineq:path-1} and \eqref{ineq:path-2} together imply that $\sum_{i\le 0} a_i > \sum _{i\geq 3} a_i$ and $a_2>a_1$. Let $p=\sum_{i\geq 3} a_i$, and $q=\sum_{i\le 1} a_i$. Notice that we have the bounds
\[
p+2<\sum_{i\le 0}a_i+2\le \sum_{i\le 1}a_i+1\le \left\lfloor\frac{n}{2}\right\rfloor+1 \quad \text{ and } \quad  2-q > 2-\left\lceil\frac{n}{2}\right\rceil.
\]
We express the difference:
$$
S_{\max}-\|h\|_1=\left(\sum_{2-\lceil \frac{n}{2}\rceil\le i\le \lfloor\frac{n}{2}\rfloor+1}|i|\right)-\sum_{i}|i|a_i
$$
$$=\left(\sum_{i=3}^{p+2} i-\sum_{i\geq 3}ia_i \right)+\left(\sum_{i=2-q}^{1} |i|-\sum_{i\le 1}|i|a_i \right)+\left(\sum_{\substack{2-\lceil \frac{n}{2}\rceil\le i\le 1-q \\ 
\text{or} \\
p+3\le i\le \lfloor\frac{n}{2}\rfloor+1}} \left(|i|-2\right) \right).
 $$
As in Case~1, the first two summands are nonnegative, with equality only when the labels in the corresponding intervals appear with multiplicity one and occupy consecutive integer values (this is the same flattening argument applied to the positive and negative sides separately). The third summand is nonempty because $a_2>1$. Moreover, the terms are all nonnegative except possibly the contribution from $i=-1$, namely $|-1|-2 = -1$; this term can occur only if $a_0=1$ and $a_i=0$ for all $i\le -1$. Since $\sum_{i\geq 3}a_i< \sum_{i\le 0} a_i$, we must have $a_3=0$, and so the term $(|3|-2)$ also appears. However, $\lfloor \frac{n}{2}\rfloor+1 \geq 4$ for $n\geq 7$, so the presence of the term $(|4|-2)$ would make the inequality strict. 

Therefore, $\|h\|_1 \le S_{\max}$, with equality exactly when the label multiset equals $I_n$. By Lemma~\ref{lem:path-unique}, this implies $T \cong P_n$.
\end{proof}

Theorem~\ref{thm:path} is precisely the first part of our main Theorem~\ref{thm:main}.

\section{Minimal 3-coloring diameter: star and nearly symmetric double stars}\label{sec:minimum-3-coloring-diameter}

In this section, we prove the second part of our main theorem: the trees with the minimum 3-coloring diameter are precisely the star graph $K_{1, n-1}$ and the double stars $S(a, b)$ with $|a-b| \le 4$. We first establish a universal lower bound for all trees (Lemma~\ref{lem:universal-lower-bound}), and then show that these trees, and only these trees, attain it.

Throughout this section, we write $u\sim v$ to denote that vertices $u$ and $v$ are adjacent. For a vertex $v$, we write $N(v)$ for its (open) neighborhood, 
$N_i(v)$ for the set of vertices at distance exactly $i$ from $v$, 
and $N_{\ge i}(v)$ for those at distance at least $i$.

\begin{lem}\label{lem:universal-lower-bound}
For each tree $T$ on $n$ vertices, we have $\diam(\C_3(T))\geq \lfloor 3n/2\rfloor$.
\end{lem}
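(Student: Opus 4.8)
The plan is to invoke Corollary~\ref{cor:balanced}, which identifies $\diam(\C_3(T))$ with the maximum $L_1$-norm of a balanced labeling of $T$. It therefore suffices to exhibit a single balanced labeling $h$ with $\|h\|_1 \ge \lfloor 3n/2\rfloor$. Because the bound must hold universally, I would look for a labeling whose validity does not depend on the shape of $T$ at all.

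The key observation is that a labeling taking values in the two consecutive integers $\{1,2\}$ automatically satisfies the edge condition $|h(u)-h(v)|\le 1$, no matter which vertices receive which label, since any two such labels differ by at most $1$. This removes the adjacency constraint entirely and lets me choose the label multiplicities freely. Accordingly, I would define $h$ by assigning the label $1$ to $\lceil n/2\rceil$ of the vertices and the label $2$ to the remaining $\lfloor n/2\rfloor$ vertices, in any fashion; this is a valid labeling of every tree.

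It then remains to check two routine facts. First, $\|h\|_1 = \lceil n/2\rceil\cdot 1 + \lfloor n/2\rfloor\cdot 2 = \lfloor 3n/2\rfloor$, verified by splitting into the two parities of $n$. Second, $h$ is balanced: by the convexity criterion~\eqref{eq:convex-balanced} it is enough to compare $\|h\|_1$ with $\|h\pm 3\|_1$. Shifting down by $3$ sends the labels to $\{-2,-1\}$, giving $\|h-3\|_1 = 2\lceil n/2\rceil + \lfloor n/2\rfloor$, which is at least $\|h\|_1$ precisely because $\lfloor n/2\rfloor \le \lceil n/2\rceil$; shifting up by $3$ sends the labels to $\{4,5\}$ and only increases the norm. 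Hence $h$ is balanced with the desired norm, completing the argument.

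The only conceptual subtlety is choosing how many vertices to place at each label so that the construction is simultaneously valid, balanced, and of norm exactly $\lfloor 3n/2\rfloor$. A first instinct—using the bipartition of $T$ to place one side at label $1$ and the other at label $0$ or $2$—runs into trouble for highly unbalanced trees such as stars, where the smaller side is too small to push the norm up to $\lfloor 3n/2\rfloor$ while keeping the balance inequality $\|h\|_1\le\|h-3\|_1$ intact. The fix is exactly the $\{1,2\}$-valued construction above, which decouples the labeling from the tree structure; once this construction is identified the verification is immediate, so I do not anticipate a genuine obstacle beyond finding it.
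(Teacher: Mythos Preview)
Your proposal is correct and follows essentially the same approach as the paper: both construct a $\{1,2\}$-valued labeling with $\lceil n/2\rceil$ ones and $\lfloor n/2\rfloor$ twos, observe that the edge condition is automatic, compute $\|h\|_1=\lfloor 3n/2\rfloor$, and verify balancedness via~\eqref{eq:convex-balanced} by direct computation of $\|h\pm 3\|_1$. Your additional remark about why the bipartition idea fails for stars is a nice piece of motivation that the paper omits.
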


\begin{proof} By Corollary~\ref{cor:balanced}, it suffices to find a balanced labeling $h\colon V\to \Z$ with $\|h\|_1 = \lfloor 3n/2\rfloor$. Partition the vertex set $V$ arbitrarily into two subsets $U_1$ and $U_2$ with $|U_1|=\lceil n/2 \rceil$ and $|U_2|=\lfloor n/2 \rfloor$. Consider the labeling $h$ given by $h(u)=1$ for all $u\in U_1$ and $h(u)=2$ for all $u\in U_2$. The condition $|h(u)-h(v)|\leq 1$ for all $uv\in E(T)$ is trivially satisfied. The norm is $\|h\|_1=\lceil n/2\rceil + 2 \lfloor n/2\rfloor = \lfloor 3n/2\rfloor$. Finally, to see that $h$ is balanced, we compute
\begin{align*}
\|h+3\|_1 &= \sum_{u\in U_1} |1+3| + \sum_{u\in U_2} |2+3| = 4\lceil n/2\rceil + 5\lfloor n/2\rfloor = \lfloor 9n/2 \rfloor > \lfloor 3n/2\rfloor = \|h\|_1,  \\ 
\|h-3\|_1 &= \sum_{u\in U_1} |1-3| + \sum_{u\in U_2} |2-3|= 2\lceil n/2\rceil + \lfloor n/2 \rfloor = \lceil 3n/2 \rceil \geq \lfloor 3n/2\rfloor = \|h\|_1.
\end{align*}
By equation~\eqref{eq:convex-balanced}, the labeling $h$ is balanced. Thus, $\diam(\C_3(T))$ is at least $\lfloor 3n/2\rfloor$.
\end{proof}

This lemma shows that $\lfloor 3n/2\rfloor$ is a universal lower bound for the $3$-coloring diameter of a tree. In the following lemmas, we prove that any tree that is not a star or a nearly symmetric double star has a strictly larger $3$-coloring diameter.

\medskip 

\begin{lem}\label{lem:small-2-neighborhood}
Let $T$ be a tree on $n$ vertices. If there exists a leaf vertex $v$ with $|N_2(v)| \le \lceil n/2\rceil-4$, then $\diam(\C_3(T)) \ge \lfloor 3n/2\rfloor + 1$.
\end{lem}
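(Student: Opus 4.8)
By Corollary~\ref{cor:balanced}, it suffices to exhibit a single balanced labeling $h$ with $\|h\|_1 \ge \lfloor 3n/2\rfloor + 1$. I would begin by unpacking the hypothesis in terms of the tree rooted at the leaf $v$. Writing $u$ for the unique neighbor of $v$, the set $N_2(v)$ consists of the remaining neighbors of $u$, so $d \colonequals |N_2(v)|$ equals $\deg(u)-1$, and the ``deep'' set $N_{\ge 3}(v)$ accounts for every other vertex. Thus $|N_{\ge 3}(v)| = n - 2 - d$, and the hypothesis $d \le \lceil n/2\rceil - 4$ rearranges to the key inequality $|N_{\ge 3}(v)| \ge \lfloor n/2\rfloor + 2$. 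The whole point of the argument is to convert this surplus of faraway vertices into extra $L_1$-norm without breaking the balanced condition.

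The plan is to build $h$ with image exactly $\{-1,0,1,2\}$, pushing as much mass as possible onto the label $2$ while retaining just enough cheap ``ballast'' at label $-1$ to keep $\|h\|_1 \le \|h-3\|_1$. Concretely, I would set $h(v) = -1$ and $h(u) = 0$, assign $h \equiv 1$ on all of $N_2(v)$, and then select exactly $\lfloor n/2\rfloor + 2$ vertices of $N_{\ge 3}(v)$ to receive label $2$, with every remaining deep vertex receiving label $1$. The choice of which deep vertices get $2$ is completely free: since $N_{\ge 3}(v)$ induces a forest hanging off the label-$1$ set $N_2(v)$, and any two labels in $\{1,2\}$ differ by at most $1$, every such assignment is automatically a valid labeling, and the image $\{-1,0,1,2\}$ is an interval. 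The only feasibility requirement is that enough deep vertices exist, namely $\lfloor n/2\rfloor + 2 \le n - 2 - d$, which is exactly the rearranged hypothesis; the leftover number of label-$1$ deep vertices is then $\lceil n/2\rceil - 4 - d \ge 0$, again by hypothesis.

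It then remains to verify two numerical facts. With multiplicities $a_{-1}=a_0=1$, $a_1 = \lceil n/2\rceil - 4$, and $a_2 = \lfloor n/2\rfloor + 2$, one computes $\|h\|_1 = 1 + a_1 + 2a_2 = n + \lfloor n/2\rfloor + 1 = \lfloor 3n/2\rfloor + 1$, the target value. Balancedness follows from~\eqref{eq:convex-balanced}: the shift $\|h+3\|_1$ is visibly larger because every label exceeds $-3$, while $\|h-3\|_1 - \|h\|_1 = 6 + a_1 - a_2 = \lceil n/2\rceil - \lfloor n/2\rfloor \ge 0$, so $\|h\|_1 \le \|h-3\|_1$ as well. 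I expect the genuine obstacle to be the placement of the interval rather than any of these computations. The naive labeling putting \emph{all} deep vertices at $2$ has larger norm but is top-heavy and violates $\|h\|_1 \le \|h-3\|_1$ unless $d$ is near its maximum, and an interval such as $\{0,1,2,3\}$ provably cannot exceed $\lfloor 3n/2\rfloor$. Shifting down to $\{-1,0,1,2\}$ is precisely what lets the cheap label $-1$ pay for the balanced inequality while the abundance of deep vertices supplies the norm, and calibrating the number of $2$'s to exactly $\lfloor n/2\rfloor + 2$ is what makes feasibility and balancedness hold simultaneously for every admissible value of $d$.
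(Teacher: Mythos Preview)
Your argument is correct and essentially identical to the paper's: you build the same labeling with $h(v)=-1$, $h(u)=0$, all of $N_2(v)$ (plus enough deep vertices) at $1$, and exactly $\lfloor n/2\rfloor+2$ deep vertices at $2$, then verify the same norm and balancedness computations. The only difference is cosmetic phrasing---the paper describes the label-$1$ set first and lets the label-$2$ set be the remainder, whereas you fix the label-$2$ count directly---but the resulting multiplicities and validity check are the same.
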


\begin{proof}
We construct a balanced labeling $h$ with $\|h\|_1 = \lfloor 3n/2 \rfloor + 1$. Let $u$ be the unique neighbor of $v$. Let $s_2 = |N_2(v)|$. Define $h\colon V \to \Z$ as follows:
\begin{itemize}
    \item $h(v) = -1$ and $h(u) = 0$.
    \item $h$ assigns the value $1$ to a set of $n_1 = \lceil n/2 \rceil - 4$ vertices, chosen to include all $s_2$ vertices in $N_2(v)$.
    \item $h$ assigns the value $2$ to all $n_2 = \lfloor n/2 \rfloor + 2$ remaining vertices.
\end{itemize}
This construction is well-defined since $s_2 \le \lceil n/2 \rceil - 4$ by hypothesis. The labeling is valid: the edge $v \sim u$ is valid ($|-1-0|=1$), edges $u \sim w$ for $w \in N_2(v)$ are valid ($|0-1|=1$), and all other edges $x \sim y$ must have $h(x), h(y) \in \{1, 2\}$, so $|h(x)-h(y)| \le 1$.

The norm of $h$ is
\begin{align*}
\|h\|_1 &= 1\cdot|-1| + 1\cdot|0| + n_1 \cdot |1| + n_2 \cdot |2| \\
&= 1 + 0 + (\lceil n/2 \rceil - 4) \cdot 1 + (\lfloor n/2 \rfloor + 2) \cdot 2 \\
&= 1 + \lceil n/2 \rceil + 2\lfloor n/2 \rfloor = 1 + n + \lfloor n/2 \rfloor = \lfloor 3n/2 \rfloor + 1.
\end{align*}
To check that $h$ is balanced, we compute $\|h-3\|_1$:
\begin{align*}
\|h-3\|_1 &= 1\cdot|-4| + 1\cdot|-3| + n_1 \cdot |-2| + n_2 \cdot |-1| \\
&= 4 + 3 + (\lceil n/2 \rceil - 4) \cdot 2 + (\lfloor n/2 \rfloor + 2) \cdot 1 \\
&= 1 + 2\lceil n/2 \rceil + \lfloor n/2 \rfloor = 1+\lceil n/2 \rceil + n = \lceil 3n/2 \rceil+1.
\end{align*}
Since $\|h-3\|_1 = 1 + \lceil 3n/2 \rceil \ge 1 + \lfloor 3n/2 \rfloor = \|h\|_1$, and $\|h+3\|_1$ is strictly larger (as $|h(x)+3| > |h(x)|$ for all $x\in V$), we conclude that $h$ is balanced by equation~\eqref{eq:convex-balanced}. Therefore, $\diam(\C_3(T)) \ge \|h\|_1 = \lfloor 3n/2 \rfloor + 1$ by Corollary~\ref{cor:balanced}.
\end{proof}

\medskip 

\begin{lem}\label{lem:large-2-neighborhood}
Let $T$ be a tree on $n$ vertices. If there exists a leaf $v$ and an integer $k \ge 1$ such that $|N_2(v)| = \lceil n/2 \rceil - 4 + k$ and $|N_{\ge 4}(v)| \ge k$, then $\diam(\mathcal C_3(T)) \ge \lfloor 3n/2\rfloor + 1$.
\end{lem}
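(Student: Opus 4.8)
The plan is to invoke Corollary~\ref{cor:balanced}, which reduces the claim to exhibiting a \emph{balanced} labeling $h\colon V\to\Z$ with $\|h\|_1=\lfloor 3n/2\rfloor+1$. As in Lemma~\ref{lem:small-2-neighborhood}, I would build $h$ from the BFS layers around the leaf $v$, but the enlarged second neighborhood now forces a modification. Write $u$ for the unique neighbor of $v$ and $s_2=|N_2(v)|=\lceil n/2\rceil-4+k$. The neighbors of $u$ other than $v$ are exactly the vertices of $N_2(v)$, so after setting $h(v)=-1$ and $h(u)=0$, the largest values we may place on $N_2(v)$ are $1$'s. In Lemma~\ref{lem:small-2-neighborhood} there was budget for exactly $\lceil n/2\rceil-4$ value-$1$ vertices; here the surplus of $k$ such vertices depresses the norm, and the idea is to recover it by pushing $k$ \emph{distant} vertices up to the value $3$.

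Concretely, I would set $h(v)=-1$, $h(u)=0$, assign $1$ to every vertex of $N_2(v)$ and $2$ to every vertex of $N_3(v)$, and on $N_{\ge 4}(v)$ assign $3$ to a chosen set of $k$ vertices and $2$ to the rest. The hypothesis $|N_{\ge 4}(v)|\ge k$ is precisely what makes this selection possible. One also checks $|N_{\ge 3}(v)|=n-2-s_2=\lfloor n/2\rfloor+2-k\ge |N_{\ge 4}(v)|\ge k$, so the number of remaining value-$2$ vertices, $\lfloor n/2\rfloor+2-2k$, is nonnegative; this inequality is a consequence of the hypotheses rather than an extra assumption.

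Validity is the step where the tree structure carries the argument: rooting $T$ at $v$, every edge joins two consecutive layers $N_d(v)$ and $N_{d+1}(v)$, so it suffices to compare the values used on adjacent layers. These run $-1,0,1,2$ and then lie in $\{2,3\}$, and each consecutive pair differs by at most $1$; the only borderline case, an edge from the value-$2$ layer $N_3(v)$ to a boosted value-$3$ vertex of $N_4(v)$, gives difference exactly $1$. A direct count gives
\[
\|h\|_1=1+s_2+2\,|N_{\ge 3}(v)|+k=1+\lceil n/2\rceil+2\lfloor n/2\rfloor=\lfloor 3n/2\rfloor+1,
\]
where the surplus $k$ value-$1$ vertices and the $k$ boosted value-$3$ vertices cancel exactly. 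For balance I would use \eqref{eq:convex-balanced}: since every label is $\ge -1$, shifting up gives $\|h+3\|_1=\|h\|_1+3n-2>\|h\|_1$, while shifting down sends $-1,0,1,2,3$ to $-4,-3,-2,-1,0$ and a short computation yields $\|h-3\|_1=\lceil 3n/2\rceil+1\ge\lfloor 3n/2\rfloor+1=\|h\|_1$. Hence $h$ is balanced, and $\diam(\C_3(T))\ge\|h\|_1=\lfloor 3n/2\rfloor+1$.

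The main obstacle is not any single calculation but making the global bookkeeping close. The defining feature of this regime is the exact cancellation between the $k$ value-$1$ vertices that the large $N_2(v)$ imposes and the $k$ value-$3$ vertices we can afford precisely because $N_{\ge 4}(v)$ is large enough. I would be careful to confirm that the distance constraints never obstruct placing a $3$ on a vertex at distance $\ge 4$ (the climb $0,1,2,3$ along the path from $u$ is always available, and by connectivity $N_3(v)\neq\varnothing$ whenever $N_{\ge 4}(v)\neq\varnothing$) and that the nonnegativity of $\lfloor n/2\rfloor+2-2k$ follows from $|N_{\ge 4}(v)|\ge k$, closing the construction.
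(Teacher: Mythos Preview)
Your proposal is correct and follows essentially the same approach as the paper: both construct the identical labeling $h$ with values $-1,0,1,2,3$ on the BFS layers around $v$ (boosting exactly $k$ vertices of $N_{\ge4}(v)$ to value $3$), verify validity via adjacent-layer differences, and compute $\|h\|_1=\lfloor 3n/2\rfloor+1$ and $\|h-3\|_1=\lceil 3n/2\rceil+1$ to confirm balance. Your explicit verification that $\lfloor n/2\rfloor+2-2k\ge 0$ follows from $|N_{\ge 3}(v)|\ge|N_{\ge 4}(v)|\ge k$ is a nice touch that the paper leaves implicit.
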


\begin{proof}
We construct a balanced labeling $h$ with $\|h\|_1 = \lfloor 3n/2\rfloor + 1$. Let $u$ be the unique neighbor of $v$. Let $s_2 = |N_2(v)|$. By hypothesis, $s_2 = \lceil n/2 \rceil - 4 + k$. Define $h\colon V \to \Z$ as follows:
\begin{itemize}
    \item $h(v) = -1$ and $h(u) = 0$.
    \item $h$ assigns the value $1$ to all $s_2$ vertices in $N_2(v)$.
    \item $h$ assigns the value $3$ to a set of $k$ vertices in $N_{\ge 4}(v)$ (which exist by hypothesis).
    \item $h$ assigns the value $2$ to all $n_2=\lfloor n/2 \rfloor + 2 - 2k$ remaining vertices.
\end{itemize}
This construction is well-defined, as the count $n_2$ for the remaining vertices is
\[ n_2 = n - 2 - s_2 - k = n - 2 - (\lceil n/2 \rceil - 4 + k) - k = \lfloor n/2 \rfloor + 2 - 2k. \]
The labeling is valid: the edge $v \sim u$ is valid ($|-1-0|=1$), edges $u \sim w$ for $w \in N_2(v)$ are valid ($|0-1|=1$), and all other edges $x \sim y$ must have $h(x), h(y) \in \{1, 2, 3\}$ and satisfy $|h(x)-h(y)| \le 1$, since vertices in $N_3(v)$ are assigned label 2.

The norm of $h$ is
\begin{align*}
\|h\|_1 &= 1\cdot|-1| + 1\cdot|0| + s_2 \cdot |1| + k \cdot |3| + n_2 \cdot |2| \\
&= 1 + 0 + (\lceil n/2 \rceil - 4 + k) \cdot 1 + 3k + (\lfloor n/2 \rfloor + 2 - 2k) \cdot 2 \\
&= 1 + \lceil n/2 \rceil - 4 + k + 3k + 2\lfloor n/2 \rfloor + 4 - 4k \\
&= 1 + \lceil n/2 \rceil + 2\lfloor n/2 \rfloor = 1 + n + \lfloor n/2 \rfloor = \lfloor 3n/2 \rfloor + 1.
\end{align*}
To check that $h$ is balanced, we compute $\|h-3\|_1$:
\begin{align*}
\|h-3\|_1 &= 1\cdot|-4| + 1\cdot|-3| + s_2 \cdot |-2| + k \cdot |0| + n_2 \cdot |-1| \\
&= 4 + 3 + (\lceil n/2 \rceil - 4 + k) \cdot 2 + 0 + (\lfloor n/2 \rfloor + 2 - 2k) \cdot 1 \\
&= 7 + 2\lceil n/2 \rceil - 8 + 2k + \lfloor n/2 \rfloor + 2 - 2k \\
&= 1 + 2\lceil n/2 \rceil + \lfloor n/2 \rfloor = 1 + \lceil n/2 \rceil + n = \lceil 3n/2 \rceil + 1.
\end{align*}
Since $\|h-3\|_1 = 1 + \lceil 3n/2 \rceil \ge 1 + \lfloor 3n/2 \rfloor = \|h\|_1$, and $\|h+3\|_1$ is strictly larger (as $|h(x)+3| > |h(x)|$ for all $x\in V$), we conclude that $h$ is balanced by equation~\eqref{eq:convex-balanced}. Therefore, $\diam(\C_3(T)) \ge \|h\|_1 = \lfloor 3n/2 \rfloor + 1$ by Corollary~\ref{cor:balanced}.
\end{proof}

\begin{figure}[ht]
\centering
\begin{tikzpicture}[
    vertex/.style={circle, draw, fill=black, inner sep=1.5pt},
    edge/.style={thick},
    label distance=2pt
]

\node[vertex, label=below:$v$] (v) at (0, 0) {};
\node[vertex, label=below:$u$] (u) at (2, 0) {};
\draw[edge] (v) -- (u);

\node[vertex] (n2_1) at (4, 1.5) {};
\node[vertex] (n2_2) at (4, 0) {};
\node[vertex] (n2_3) at (4, -1.5) {};
\draw[edge] (u) -- (n2_1);
\draw[edge] (u) -- (n2_2);
\draw[edge] (u) -- (n2_3);
\node at (4, 2.7) {$N_2(v)$};
\draw[dashed, color=gray] (4, 0.3) ellipse (0.7cm and 2.1cm);

\node[vertex] (n3_1) at (6, 1.5) {};
\node[vertex] (n3_2) at (6, 0) {};
\draw[edge] (n2_1) -- (n3_1);
\draw[edge] (n2_2) -- (n3_2);
\node at (6, 2.7) {$N_3(v)$};
\draw[dashed, color=gray] (6, 0.75) ellipse (0.7cm and 1.6cm);

\node[vertex] (n4_1) at (8, 1.5) {};
\node[vertex] (n4_2) at (8, 0.3) {};
\node[vertex] (w) at (8, -0.7) {}; 
\draw[edge] (n3_1) -- (n4_1);
\draw[edge] (n3_2) -- (n4_2);
\draw[edge] (n3_2) -- (w); 
\node at (8, 2.7) {$N_{\ge 4}(v)$};
\draw[dashed, color=gray] (8, 0.25) ellipse (0.7cm and 2.1cm);

\node[above=4pt of v, font=\bfseries] {$-1$};
\node[above=4pt of u, font=\bfseries] {$0$};
\node[above=4pt of n2_1, font=\bfseries] {$1$};
\node[above=4pt of n2_2, font=\bfseries] {$1$};
\node[above=4pt of n2_3, font=\bfseries] {$1$};
\node[above=4pt of n3_1, font=\bfseries] {$2$};
\node[above=4pt of n3_2, font=\bfseries] {$2$};
\node[above=4pt of n4_1, font=\bfseries] {$3$};
\node[above=4pt of n4_2, font=\bfseries] {$3$};
\node[below=4pt of w, font=\bfseries] {$2$};

\end{tikzpicture}
\caption{A balanced labeling $h\colon V \to \Z$ with $\|h\|_1 = \lfloor 3n/2 \rfloor + 1$. The numbers shown are the values $h(x)$. This tree on $n=10$ vertices has $|N_2(v)|=3$ and $|N_{\ge 4}(v)|=3$. This satisfies the hypothesis of Lemma~\ref{lem:large-2-neighborhood} for $k=2$ (since $|N_2(v)| = 3 = \lceil 10/2 \rceil - 4 + 2$ and $|N_{\ge 4}(v)| = 3 \ge 2$). The labeling $h$ assigns $3$ to $k=2$ vertices in $N_{\ge 4}(v)$ and $2$ to all other vertices in $N_{\ge 3}(v)$.}
\label{fig:labeling}
\end{figure}
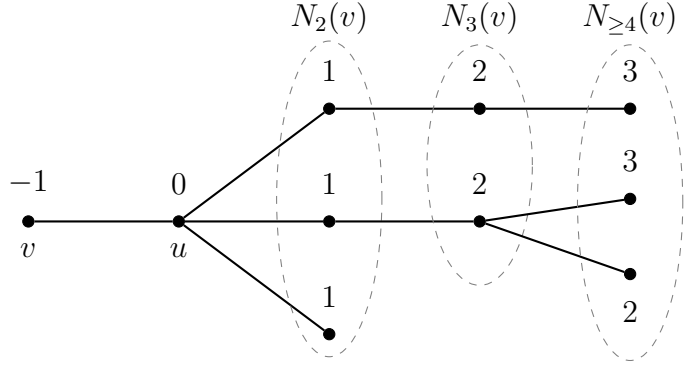

\begin{lem}\label{lem:unbalanced-double-star}
    If $T=S(a, b)$ is a double star with $|a-b|> 4$, then $\diam(\C_3(T))>\lfloor \frac{3n}{2}\rfloor$.
\end{lem}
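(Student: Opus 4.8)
The plan is to deduce this directly from Lemma~\ref{lem:small-2-neighborhood}: I will exhibit a leaf of $T$ whose second neighborhood is small enough to trigger that lemma, which then yields $\diam(\C_3(T)) \ge \lfloor 3n/2\rfloor + 1 > \lfloor 3n/2\rfloor$. Since the statement is symmetric in $a$ and $b$ (as $S(a,b)\cong S(b,a)$), I may assume without loss of generality that $a > b$; because $a,b$ are integers with $|a-b| > 4$, this means $a \ge b + 5$. Note also that $b \ge 1$, since $T$ is a genuine double star.

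Let $c_1$ and $c_2$ denote the two centers of $T$, where $c_2$ is the center carrying the $b$ leaves, and let $v$ be any leaf adjacent to $c_2$ (such a leaf exists because $b \ge 1$). I would then compute the second neighborhood of $v$ explicitly: the unique neighbor of $v$ is $c_2$, and the vertices at distance exactly $2$ from $v$ are $c_1$ together with the remaining $b-1$ leaves of $c_2$. Hence $|N_2(v)| = b$.

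It remains to verify the numerical hypothesis $|N_2(v)| \le \lceil n/2\rceil - 4$ of Lemma~\ref{lem:small-2-neighborhood}. Using $n = a + b + 2$ and $a \ge b + 5$, we obtain $n \ge 2b + 7$, so by monotonicity of the ceiling function $\lceil n/2\rceil \ge \lceil (2b+7)/2\rceil = b + 4$. Therefore $\lceil n/2\rceil - 4 \ge b = |N_2(v)|$, exactly as required. Applying Lemma~\ref{lem:small-2-neighborhood} to the leaf $v$ then gives $\diam(\C_3(T)) \ge \lfloor 3n/2\rfloor + 1$, which is strictly greater than $\lfloor 3n/2\rfloor$, completing the proof.

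The argument is essentially a bookkeeping reduction, so I do not anticipate a serious obstacle; the only point requiring care is the floor/ceiling arithmetic in the last step, where one must confirm that $a \ge b+5$ forces $\lceil n/2\rceil \ge b+4$ for both parities of $n$, including the tight boundary cases with $|a-b| = 5$. A quick check of the extremal case $a = b+5$ (so $n = 2b+7$ is odd and $\lceil n/2\rceil - 4 = b$ exactly) confirms the bound is met with equality precisely there, while strictly larger gaps $a - b$ only increase the available slack.
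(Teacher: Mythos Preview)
Your proof is correct and follows essentially the same approach as the paper: both arguments pick a leaf on the smaller side of the double star, compute its second neighborhood, and feed the resulting inequality into Lemma~\ref{lem:small-2-neighborhood}. The only cosmetic differences are that the paper swaps the roles of $a$ and $b$ (assuming $b \ge a+5$) and phrases the argument as a contradiction rather than directly, but the underlying arithmetic is identical.
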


\begin{proof}
Assume, to the contrary, that $\diam(\C_3(T)) = \lfloor 3n/2\rfloor$. By Lemma~\ref{lem:small-2-neighborhood}, this implies that \emph{every} leaf $v$ must satisfy $|N_2(v)| > \lceil n/2 \rceil - 4$, that is, $|N_2(v)| \geq \lceil n/2 \rceil - 3$.

Let $u_1$ be the center with $a$ leaves, and $u_2$ be the center with $b$ leaves. By hypothesis, $|a-b| \ge 5$, so we may assume $b \ge a+5$.
Let $v$ be a leaf attached to $u_1$. The set $N_2(v)$ consists of the other $a-1$ leaves on $u_1$ and the center $u_2$, so $|N_2(v)| = a$.
By our assumption, $a \geq \lceil n/2 \rceil - 3$. 

Combining $n = a+b+2$ and $b \ge a+5$ yields
\[
n \ge a + (a+5) + 2 = 2a+7.
\]
Now, using $a \ge \lceil n/2 \rceil - 3$, we have
\[
n \ge 2a+7 \ge 2(\lceil n/2 \rceil - 3) + 7 = 2\lceil n/2 \rceil + 1 \geq n+1,
\]
a contradiction.
\end{proof}

\begin{lem}\label{lem:reduction-diameter-3}
   Suppose $T$ is a tree on $n \ge 7$ vertices such that for every leaf $v$, $|N_2(v)| > \lceil n/2 \rceil - 4$ and $|N_{\ge 4}(v)| < |N_2(v)| + 4 - \lceil n/2 \rceil$. Then $\diam(T) \le 3$.
\end{lem}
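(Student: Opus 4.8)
The plan is to argue by contradiction: I would assume $\diam(T)\ge 4$ and show that this forces $\lceil n/2\rceil\le 3$, contradicting $n\ge 7$. The whole argument is driven by a single standard longest-path observation combined with the second hypothesis applied simultaneously at \emph{both} endpoints of a diametral path. No use of balanced labelings is needed here; this is a purely structural statement about $T$.

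First I would fix a longest path $P=(x_0,x_1,\dots,x_d)$ with $d=\diam(T)\ge 4$. Its endpoints $v\colonequals x_0$ and $w\colonequals x_d$ are leaves, with respective neighbors $u\colonequals x_1$ and $u'\colonequals x_{d-1}$. The key structural step is the familiar fact that, because $v$ is an endpoint of a longest path, every neighbor of $u$ other than $x_2$ is a leaf: if some neighbor $z\neq x_2$ had a further neighbor $z'\neq u$, then $z'\to z\to u\to x_2\to\cdots\to x_d$ would be a path of length $d+1$, contradicting maximality. Hence the leaf-neighbors of $u$ form the set $N(u)\setminus\{x_2\}$, and since $N_2(v)=N(u)\setminus\{v\}$, both sets have $\deg(u)-1$ elements; thus the number of leaf-neighbors of $u$ equals $|N_2(v)|$. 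The symmetric statement at the far end gives that $u'$ has exactly $|N_2(w)|$ leaf-neighbors.

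Next comes the distance bookkeeping. Every leaf-neighbor $z$ of $u$ satisfies $\dist(z,w)=d\ge 4$, since in a tree its unique path to $w$ runs $z\to u\to x_2\to\cdots\to x_d$; therefore all $|N_2(v)|$ of them lie in $N_{\ge 4}(w)$, giving $|N_{\ge 4}(w)|\ge |N_2(v)|$. Symmetrically $|N_{\ge 4}(v)|\ge |N_2(w)|$. I then rewrite the second hypothesis, using integrality, as $|N_2(\ell)|\ge |N_{\ge 4}(\ell)|+\lceil n/2\rceil-3$ for every leaf $\ell$. Applying this at $v$ and combining with $|N_{\ge 4}(v)|\ge |N_2(w)|$ yields $|N_2(v)|\ge |N_2(w)|+\lceil n/2\rceil-3$; applying it at $w$ and combining with $|N_{\ge 4}(w)|\ge |N_2(v)|$ yields $|N_2(w)|\ge |N_2(v)|+\lceil n/2\rceil-3$. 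Adding the two inequalities gives $0\ge 2(\lceil n/2\rceil-3)$, i.e.\ $\lceil n/2\rceil\le 3$, which is impossible for $n\ge 7$. Hence $\diam(T)\le 3$.

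The step I would be most careful about is the structural claim isolating $x_2$ as the unique non-leaf neighbor of $u$ and the resulting identity (number of leaf-neighbors of $u$)$\,=|N_2(v)|$: one must verify that the hypothetical extending path genuinely avoids $P$, which holds because a tree has no cycles, so that it really is longer than $P$. Everything after that is routine — the distance computations are immediate and the contradiction is a one-line addition. I also note that this argument invokes only the second hypothesis (at the two endpoints); the first hypothesis $|N_2(v)|>\lceil n/2\rceil-4$ is not strictly needed for the conclusion, although it is exactly what makes the second inequality non-vacuous and what links this lemma to the complementary cases handled by Lemmas~\ref{lem:small-2-neighborhood} and~\ref{lem:large-2-neighborhood}.
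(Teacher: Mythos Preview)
Your proof is correct and takes a somewhat different route from the paper's. The paper picks any two leaves $v,w$ at distance $\ge 4$, assumes without loss of generality that $|N_2(w)|\ge|N_2(v)|$, and then splits into the cases $\dist(v,w)=4$ and $\dist(v,w)\ge 5$; in each case it shows $|N_{\ge 4}(v)|\ge|N_2(w)|\ge|N_2(v)|$ (with an extra $+1$ in the second case) and invokes the hypothesis once, at $v$, to force $\lceil n/2\rceil\le 3$. You instead take the endpoints of a longest path, establish the two symmetric inequalities $|N_{\ge 4}(w)|\ge|N_2(v)|$ and $|N_{\ge 4}(v)|\ge|N_2(w)|$, apply the hypothesis at \emph{both} endpoints, and add. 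This symmetry lets you dispense with both the WLOG ordering and the case split, which is cleaner. One minor remark: the longest-path structural claim (that $x_2$ is the unique non-leaf neighbor of $u$) is correct but not actually needed for the inequality $|N_{\ge 4}(w)|\ge|N_2(v)|$; every vertex of $N(u)\setminus\{x_2\}$ is at distance exactly $d$ from $w$ whether or not it is a leaf, since its unique path to $w$ in the tree must pass through $u$. Your observation that only the second hypothesis is used is also correct, and the paper's proof likewise never invokes the first.
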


\begin{proof}
    Suppose, towards a contradiction, that $\diam(T)\ge 4$. Then there exist leaves $v, w$ with $\dist_T(v,w)=\delta \ge 4$. Without loss of generality, assume $|N_2(w)| \ge |N_2(v)|$. Let $P$ be a shortest path between $v$ and $w$.

    \textbf{Case 1:} $\dist_T(v,w)=4$. Let the path $P$ be $v \sim u \sim z \sim x \sim w$. Then $N_2(w) = N(x) \setminus \{w\}$, so $|N_2(w)| = \deg(x)-1$. The set $N_{\ge 4}(v)$ contains all vertices in $N(x) \setminus \{z\}$, since these are at distance $4$ from $v$. Thus,
\begin{align}\label{ineq1:diam-3-reduction}
 |N_{\ge 4}(v)| \ge |N(x) \setminus \{z\}| = \deg(x)-1 = |N_2(w)| \ge |N_2(v)|.
\end{align}
However, the lemma hypothesis states that:
\begin{equation}\label{ineq2:diam-3-reduction}
    |N_{\ge 4}(v)| < |N_2(v)| + 4 - \lceil n/2 \rceil
\end{equation}
Combining the two inequalities \eqref{ineq1:diam-3-reduction} and \eqref{ineq2:diam-3-reduction} yields $4-\lceil n/2\rceil > 0$, 
a contradiction for $n\geq 7$.

    \textbf{Case 2:} $\dist_T(v, w)\geq 5$. Let $x$ be the neighbor of $w$ on $P$, and let $y$ be the neighbor of $x$ preceding it on $P$. As before, $|N_2(w)| = \deg(x)-1$. The set $N_{\ge 4}(v)$ contains the vertex $w$ (at distance $\delta \ge 5$) and the vertex $x$ (at distance $\delta-1 \ge 4$). It also contains the vertices in $N(x) \setminus \{w, y\}$, all of which are at distance $\delta$ from $v$. Thus,
\begin{align}\label{ineq3:diam-3-reduction}
    |N_{\ge 4}(v)| \geq 1 + 1 + (\deg(x)-2) = |N_2(w)|+1 \geq |N_2(v)|+1.
\end{align}
Combining inequality~\eqref{ineq3:diam-3-reduction} with the lemma's hypothesis \eqref{ineq2:diam-3-reduction} yields $3-\lceil n/2\rceil>0$, a contradiction for $n\geq 7$.
\end{proof}

\begin{cor}\label{cor:minimum-3-coloring-diameter}
If $n \ge 7$ and a tree $T$ has minimum 3-coloring diameter $\lfloor 3n/2\rfloor$, then $T$ is either a star or a double star $S(a,b)$ with $|a-b|\le 4$.
\end{cor}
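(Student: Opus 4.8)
The plan is to assemble the preceding lemmas into a clean contrapositive argument. I start by assuming $T$ is an $n$-vertex tree ($n\ge 7$) achieving the minimum diameter $\lfloor 3n/2\rfloor$, and I want to force $T$ to be a star or a nearly symmetric double star. The key observation is that the minimality of the diameter rules out all the ``diameter-increasing'' configurations detected by Lemmas~\ref{lem:small-2-neighborhood} and \ref{lem:large-2-neighborhood}. First I would extract, for \emph{every} leaf $v$, two numerical constraints from these lemmas: since $\diam(\C_3(T))=\lfloor 3n/2\rfloor$ cannot exceed $\lfloor 3n/2\rfloor$, neither hypothesis can hold. Lemma~\ref{lem:small-2-neighborhood} failing gives $|N_2(v)|>\lceil n/2\rceil-4$ for every leaf $v$. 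Lemma~\ref{lem:large-2-neighborhood} failing is slightly more delicate: writing $k=|N_2(v)|-(\lceil n/2\rceil-4)\ge 1$, the conclusion would hold unless $|N_{\ge 4}(v)|<k=|N_2(v)|+4-\lceil n/2\rceil$, so I obtain exactly the second hypothesis of Lemma~\ref{lem:reduction-diameter-3} for every leaf.

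Having verified that both hypotheses of Lemma~\ref{lem:reduction-diameter-3} hold for every leaf $v$, I apply that lemma to conclude $\diam(T)\le 3$. This is the crux of the reduction: a tree of diameter at most $3$ is either a single edge, a star, or a double star. Since $n\ge 7$, the single-edge case is excluded, so $T$ is a star $K_{1,n-1}$ (diameter $2$) or a double star $S(a,b)$ (diameter $3$). I would then invoke Lemma~\ref{lem:unbalanced-double-star} to finish: if $T=S(a,b)$ with $|a-b|>4$, that lemma gives $\diam(\C_3(T))>\lfloor 3n/2\rfloor$, contradicting minimality. Hence any double star achieving the minimum must satisfy $|a-b|\le 4$, which is precisely the claimed characterization.

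The main obstacle, and the step requiring the most care, is translating the negation of Lemma~\ref{lem:large-2-neighborhood} correctly for \emph{all} values of $k\ge 1$ simultaneously. One must check that the single inequality $|N_{\ge 4}(v)|<|N_2(v)|+4-\lceil n/2\rceil$ indeed captures the failure of the lemma's hypothesis for the relevant $k$; this works because the parameter $k$ is forced to equal $|N_2(v)|-(\lceil n/2\rceil-4)$ by the first equation in the hypothesis, so there is no freedom in $k$ once $v$ is fixed. Everything else is bookkeeping: the diameter-$3$ structure theorem for trees is elementary, and the exclusion of large $|a-b|$ is immediate from Lemma~\ref{lem:unbalanced-double-star}. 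I should also note that the converse direction (these trees actually \emph{attain} the bound) is handled separately by the explicit constructions in Lemma~\ref{lem:universal-lower-bound} together with the matching upper bounds established elsewhere, so this corollary only needs the forward implication.
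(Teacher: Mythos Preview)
Your proposal is correct and follows essentially the same approach as the paper's own proof: both assemble Lemmas~\ref{lem:small-2-neighborhood}, \ref{lem:large-2-neighborhood}, \ref{lem:unbalanced-double-star}, and \ref{lem:reduction-diameter-3} in the same way, and your handling of the parameter $k$ in the negation of Lemma~\ref{lem:large-2-neighborhood} is exactly right. The only difference is organizational---you argue the forward implication directly (minimality forces the hypotheses of Lemma~\ref{lem:reduction-diameter-3}, hence $\diam(T)\le 3$, hence star or double star, then exclude $|a-b|>4$), whereas the paper argues the contrapositive (if $T$ is not in the class, split into the unbalanced-double-star case and the $\diam(T)\ge 4$ case, the latter handled via the contrapositive of Lemma~\ref{lem:reduction-diameter-3}).
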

\begin{proof}
By Lemma~\ref{lem:universal-lower-bound}, every tree on $n$ vertices satisfies $\diam(\C_3(T)) \ge \lfloor 3n/2 \rfloor$. We show that trees not in the stated class have strictly larger diameter. Assume $T$ is neither a star nor a double star $S(a,b)$ with $|a-b|\le 4$. We consider two cases.

\textbf{Case 1:} $T = S(a,b)$ with $|a-b| > 4$. By Lemma~\ref{lem:unbalanced-double-star}, $\diam(\C_3(T)) > \lfloor 3n/2 \rfloor$.

\textbf{Case 2:} $T$ is not a star or double star. Then $\diam(T) \ge 4$. By the contrapositive of Lemma~\ref{lem:reduction-diameter-3}, there exists a leaf $v$ such that at least one of the following holds:
\begin{enumerate}
    \item[(A)] $|N_2(v)| \le \lceil n/2 \rceil - 4$, or
    \item[(B)] $|N_{\ge 4}(v)| \ge |N_2(v)| + 4 - \lceil n/2 \rceil$.
\end{enumerate}

If (A) holds, then by Lemma~\ref{lem:small-2-neighborhood}, $\diam(\C_3(T)) \ge \lfloor 3n/2 \rfloor + 1$.

If (A) does not hold but (B) holds, let $k = |N_2(v)| + 4 - \lceil n/2 \rceil$. Since (A) fails, we have $|N_2(v)| > \lceil n/2 \rceil - 4$, which implies $k \ge 1$. By rearranging the definition of $k$, we obtain $|N_2(v)| = \lceil n/2 \rceil - 4 + k$. Combined with (B), which gives $|N_{\ge 4}(v)| \ge k$, the conditions of Lemma~\ref{lem:large-2-neighborhood} are satisfied. Thus, $\diam(\C_3(T)) \ge \lfloor 3n/2 \rfloor + 1$.

In both cases, $\diam(\C_3(T)) > \lfloor 3n/2 \rfloor$, completing the proof.
\end{proof}

\begin{prop}\label{prop:star}
The 3-coloring diameter of $K_{1, n-1}$ is $\lfloor 3n/2 \rfloor$.
\end{prop}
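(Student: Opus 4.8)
The goal is to compute $\diam(\C_3(K_{1,n-1}))$ exactly, and the natural strategy is to apply Corollary~\ref{cor:balanced}: the diameter equals the maximum of $\|h\|_1$ over balanced labelings $h\colon V\to\Z$. Since Lemma~\ref{lem:universal-lower-bound} already gives the lower bound $\diam(\C_3(K_{1,n-1}))\ge \lfloor 3n/2\rfloor$, it remains only to prove the matching upper bound $\|h\|_1\le \lfloor 3n/2\rfloor$ for every balanced labeling $h$ of the star. The key structural fact about $K_{1,n-1}$ is that it has one center $c$ adjacent to all $n-1$ leaves, and the labeling constraint $|h(u)-h(v)|\le 1$ only couples each leaf to the center. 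Thus if $h(c)=t$, every leaf must be labeled in $\{t-1,t,t+1\}$, and these are the \emph{only} constraints; the labeling is otherwise completely free.

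The plan is to fix $h(c)=t$ and analyze the balanced condition. By the symmetry $h\mapsto -h$ (which preserves both balancedness and $\|h\|_1$), I may assume $t\ge 0$. Writing $a_{t-1},a_t,a_{t+1}$ for the number of leaves at each of the three allowed values, I would first argue that a balanced labeling cannot have its center label $t$ too large: the balanced inequality $\|h\|_1\le\|h-3\|_1$ forces the bulk of the labels to sit near $0$, so $t\in\{0,1\}$ (the case $t=2$ and beyond should be ruled out exactly as in Lemma~\ref{lem:median-control}, or by a direct computation of $\Phi(0)-\Phi(-3)$). Then, for each surviving value of $t$, I would directly maximize $\|h\|_1=|t|+\sum_{\text{leaves}}|h(\ell)|$ subject to the balanced constraints on the counts $a_{t-1},a_t,a_{t+1}$ summing to $n-1$. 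Concretely, for $t=1$ the leaves lie in $\{0,1,2\}$ and the optimum pushes as many as possible to value $2$ while the condition $\|h\|_1\le\|h-3\|_1$ caps how many can do so; a short count should yield exactly $\lfloor 3n/2\rfloor$, matching the construction in Lemma~\ref{lem:universal-lower-bound} (all leaves at $2$, center at $1$, or the symmetric variants).

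The main obstacle I anticipate is bookkeeping rather than conceptual: I must verify that \emph{no} distribution of the three leaf-multiplicities beats $\lfloor 3n/2\rfloor$, which requires translating each balanced inequality $\|h\|_1\le\|h\pm 3\|_1$ into a linear inequality on $(a_{t-1},a_t,a_{t+1})$ and then optimizing the linear objective $\|h\|_1$ over the resulting polytope. Since there are only finitely many admissible center values and a three-variable count per value, the optimization reduces to checking a handful of vertices of a low-dimensional region, so the argument is elementary but must be carried out carefully to confirm that the maximum is attained exactly at the balanced labelings exhibited in Lemma~\ref{lem:universal-lower-bound}. I expect the cleanest writeup to mirror the case analysis of Lemma~\ref{lem:median-control} and the difference-of-norms computations $\Phi(0)-\Phi(-3)=\sum_i a_i(|i|-|i-3|)$ used in Theorem~\ref{thm:path}, specialized to the star's rigid three-value label structure.
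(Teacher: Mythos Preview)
Your approach is correct but genuinely different from the paper's. The paper does \emph{not} use Corollary~\ref{cor:balanced} for the upper bound here; instead it argues directly in $\C_3(K_{1,n-1})$. It encodes a coloring by a pair $(c,m)$ (center color and number of leaves colored $c+1$), and for any two colorings with different center colors it exhibits two explicit reconfiguration paths whose lengths sum to exactly $3n$, so the shorter one has length at most $\lfloor 3n/2\rfloor$. This ``complementary paths'' trick is chosen deliberately because it scales to the double-star case in Proposition~\ref{prop:double-star}, where the same pairing idea is reused.

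Your route via balanced labelings is arguably more in the spirit of the paper's machinery and is shorter once Corollary~\ref{cor:balanced} is available: after reducing to $t\ge 0$, the balanced inequality $\|h\|_1\le\|h-3\|_1$ translates to a single linear constraint on $(a_{t-1},a_t,a_{t+1})$, and the linear objective $\|h\|_1$ is then bounded by $\lfloor 3n/2\rfloor$ in each case. One small correction: the case $t=2$ is \emph{not} ruled out by balancedness (there are balanced labelings with center label $2$); rather, the same linear-programming computation shows $\|h\|_1\le\lfloor 3n/2\rfloor$ there as well, while $t\ge 3$ is indeed excluded. The trade-off is that your argument does not obviously extend to $S(a,b)$, where leaves at the two centers can sit at different levels and the polytope becomes higher-dimensional, whereas the paper's path-pairing argument handles both propositions uniformly.
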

\begin{proof}
The lower bound follows from Lemma~\ref{lem:universal-lower-bound}. For the upper bound, it suffices to show that any two 3-colorings are at distance at most $\lfloor 3n/2 \rfloor$.

We represent a 3-coloring by the pair $(c, m)$, where $c \in \{0, 1, 2\}$ is the color of the center vertex and $m \in \{0, 1, \ldots, n-1\}$ is the number of leaves colored $c+1 \pmod{3}$. The remaining $n-1-m$ leaves are colored $c+2 \pmod{3}$. Let $\ell(\cdot)$ denote the length of a path in the 3-coloring graph. In the paths below, center flips are performed only when all leaves avoid the new center color, and leaf recolorings are single-vertex steps.

Consider two colorings $x_1 = (c_1, m_1)$ and $x_2 = (c_2, m_2)$. If $c_1 = c_2$, then $\dist_{\C_3(T)}(x_1, x_2) = |m_1 - m_2| \le n-1 < \lfloor 3n/2 \rfloor$ for $n \ge 3$.

Now suppose $c_1 \neq c_2$. By symmetry, we may assume $c_1 = 0$. We consider the two cases $c_2 = 1$ and $c_2 = 2$ separately.

\textbf{Case 1:} $c_2 = 1$. We exhibit two paths from $(0, m_1)$ to $(1, m_2)$:
\begin{align*}
P_1 &\colon (0, m_1) \xrightarrow{m_1} (0, 0) \xrightarrow{1} (1, n-1) \xrightarrow{n-1-m_2} (1, m_2), \\
P_2 &\colon (0, m_1) \xrightarrow{n-1-m_1} (0, n-1) \xrightarrow{1} (2, 0) \xrightarrow{n-1} (2, n-1) \xrightarrow{1} (1, 0) \xrightarrow{m_2} (1, m_2).
\end{align*}
The path lengths are $\ell(P_1) = n + m_1 - m_2$ and $\ell(P_2) = 2n - m_1 + m_2$, so $\ell(P_1) + \ell(P_2) = 3n$. Thus, $\min(\ell(P_1), \ell(P_2)) \le \lfloor 3n/2 \rfloor$.

\textbf{Case 2:} $c_2 = 2$. We exhibit two paths from $(0, m_1)$ to $(2, m_2)$:
\begin{align*}
Q_1 &\colon (0, m_1) \xrightarrow{m_1} (0, 0) \xrightarrow{1} (1, n-1) \xrightarrow{n-1} (1, 0) \xrightarrow{1} (2, n-1) \xrightarrow{n-1-m_2} (2, m_2), \\
Q_2 &\colon (0, m_1) \xrightarrow{n-1-m_1} (0, n-1) \xrightarrow{1} (2, 0) \xrightarrow{m_2} (2, m_2).
\end{align*}
The path lengths are $\ell(Q_1) = 2n + m_1 - m_2$ and $\ell(Q_2) = n - m_1 + m_2$, so $\ell(Q_1) + \ell(Q_2) = 3n$. Thus, $\min(\ell(Q_1), \ell(Q_2)) \le \lfloor 3n/2 \rfloor$.

In all cases, $\dist_{\C_3(T)}(x_1, x_2) \le \lfloor 3n/2 \rfloor$, completing the proof.
\end{proof}

We note that Proposition~\ref{prop:star} appears in the literature (see \cite[Proposition 2]{CvBC24}). However, we include our proof above because it motivates the argument for the next result, which is new.

\begin{prop}\label{prop:double-star}
Suppose $a,b\in\mathbb{N}$ with $|a-b|\le 4$, and let $n=a+b+2$. Then the 3-coloring diameter of the double star $S(a,b)$ is $\lfloor 3n/2\rfloor$.
\end{prop}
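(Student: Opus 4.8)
The plan is to prove the matching lower and upper bounds separately. The lower bound $\diam(\C_3(S(a,b)))\ge\lfloor 3n/2\rfloor$ is immediate from Lemma~\ref{lem:universal-lower-bound}. For the upper bound I invoke Corollary~\ref{cor:balanced}, which reduces the claim to showing that a balanced labeling $h$ of maximum $L_1$-norm satisfies $\|h\|_1\le\lfloor 3n/2\rfloor$. By Lemma~\ref{lem:median-control} I may assume (replacing $h$ by $-h$ if necessary) that the median $M$ of its labels lies in $\{1,\tfrac32,2\}$; since $S(a,b)\cong S(b,a)$, I also assume $a\le b$ after swapping the two centers. As $S(a,b)$ has diameter $3$, the labels span at most $4$ consecutive integers, and together with $M\le 2$ this confines the image of $h$ to one of the windows $\{-1,0,1,2\}$, $\{0,1,2,3\}$, $\{1,2,3,4\}$, or a sub-interval thereof.

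The main device is to record the two center values $p=h(u_1)$ and $q=h(u_2)$ (with $|p-q|\le 1$) and, for each leaf, its offset from the center it is attached to. For each admissible pair $(p,q)$ compatible with the window above, both $\|h\|_1$ and $\|h-3\|_1$ become explicit linear functions of the leaf-offset multiplicities; since $M\ge 0$ the upward shift never decreases the norm, so the only binding balanced inequality is $\|h\|_1\le\|h-3\|_1$ from~\eqref{eq:convex-balanced}. Maximizing $\|h\|_1$ subject to this single inequality and to the realizability constraints (the $a$ leaves of $u_1$ lie within $1$ of $p$, the $b$ leaves of $u_2$ within $1$ of $q$) is a small linear program in each case, which I solve by hand.

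For the central configurations — those with both centers in $\{1,2\}$, equivalently the windows $\{0,1,2\}$, $\{1,2,3\}$, and $\{0,1,2,3\}$ — the balanced inequality alone forces $\|h\|_1\le\tfrac{3n}{2}$ with no reference to $a$ and $b$, and the uniform labeling with values in $\{1,2\}$ from Lemma~\ref{lem:universal-lower-bound} shows this is sharp. The delicate configurations are the two extreme windows where a label far from the bulk is exploited: centers $(0,1)$ placing the label $2$ on the more populous $b$-side, and centers $(3,2)$ placing the label $4$ on the $a$-side while the abundant compensating $1$'s (needed to satisfy balance) sit on the $b$-side. In each the linear program returns a bound of the shape $\tfrac{3n}{2}+c$, where $c$ counts the surplus of extreme labels; the realizability constraint ties $c$ to $b-a$, and a direct computation shows $c\le 0$ exactly when $b-a\le 4$ (for the first configuration; the second is safe already for $b-a\le 5$). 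A short parity argument then replaces $\tfrac{3n}{2}$ by $\lfloor 3n/2\rfloor$ and confirms that $|a-b|\le 4$ is the precise threshold.

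The hard part is the analysis of these two extreme configurations. Away from them the convexity and balance bookkeeping already caps the norm at $\tfrac{3n}{2}$, so all of the arithmetic subtlety — and the entire role of the hypothesis $|a-b|\le 4$ — is concentrated in controlling how many far-from-zero labels the more populous center can host without violating balance, and in tracking floors and parities so that the final bound is exactly $\lfloor 3n/2\rfloor$ rather than $\lfloor 3n/2\rfloor+1$. This is the point at which the argument mirrors the two-path bookkeeping in Proposition~\ref{prop:star}: there two competing recoloring strategies were played off against each other, whereas here the competition is between pushing labels outward to raise the norm and the balanced constraint pulling them back.
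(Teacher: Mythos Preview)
Your approach is sound and takes a genuinely different route from the paper. The paper proves the upper bound by working directly in $\C_3(S(a,b))$: it encodes a coloring by $(c_1,c_2,m_1,m_2)$ (the two center colors and the leaf counts), uses the $S_3$-action on colors to reduce to three center-transition patterns, and in the two two-center cases exhibits a pair of explicit reconfiguration paths whose lengths sum to exactly $3n$, so the shorter one is at most $\lfloor 3n/2\rfloor$; the single-center case is handled by a direct length estimate, and that is the only place where the hypothesis $|a-b|\le 4$ is invoked. You instead stay on the balanced-labeling side of Corollary~\ref{cor:balanced}: the graph-diameter-$3$ structure of $S(a,b)$ confines the labels to a window of width at most $4$, Lemma~\ref{lem:median-control} pins down that window's position, and the upper bound becomes a short linear program for each admissible pair of center labels. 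Your argument leans on more of the machinery of Sections~\ref{sec:balanced-labelings}--\ref{sec:median} and makes the threshold $4$ appear structurally, as the realizability constraint on how many extreme labels the larger side can host while remaining balanced; the paper's argument is more elementary and self-contained but less explanatory about why $4$ is the critical gap. One small omission to patch: your window enumeration misses $\{-2,-1,0,1\}$, which is compatible with $M=1$ when the maximum label is $1$ (this forces centers at $(-1,0)$ with $b\ge a+3$); the norm there is at most $2a+b+1<\lfloor 3n/2\rfloor$ for $a\le b$, so it is harmless, but it should be dispatched explicitly.
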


\begin{proof}
The lower bound follows from Lemma~\ref{lem:universal-lower-bound}. For the upper bound, it suffices to show that any two 3-colorings $x_1$ and $x_2$ are at distance at most $\lfloor 3n/2\rfloor$.

\textbf{Notation.}
We encode a 3-coloring by $(c_1,c_2,m_1,m_2)$, where $c_i\in\{0,1,2\}$ are the colors of the two centers $v$ and $w$, and $m_1\in\{0,\ldots,a\}$ (resp.\ $m_2\in\{0,\ldots,b\}$) is the number of leaves at $v$ (resp.\ $w$) colored $c_1+1\pmod{3}$ (resp.\ $c_2+1\pmod{3}$). The remaining leaves at each center are colored $c_i+2\pmod{3}$. A center flip to a color $\gamma$ is valid if and only if every incident leaf avoids $\gamma$. Let $\ell(\cdot)$ denote path length in $\C_3(S(a,b))$.

\textbf{Same-center case.}
If two colorings $x_1, x_2$ have the same pair $(c_1,c_2)$, then recoloring only leaves yields
\[
\dist_{\C_3(T)}(x_1,x_2)\le a+b=n-2<\lfloor 3n/2\rfloor.
\]

\textbf{Reduction for distinct-center pairs.}
For the remaining pairs, at least one center color changes. Any permutation of $\{0,1,2\}$ induces an automorphism of $\C_3(S(a,b))$ and hence preserves distances. Thus, without loss of generality, we may relabel colors so that the two \emph{source} centers have colors $(c_1,c_2)=(0,1)$. After this normalization, the possible \emph{targets} fall, up to symmetry, into exactly three patterns:
\begin{align*}
&(0,1)\to(0,2)\quad\text{(single-center change)} \\ 
& (0,1)\to(1,2),\ (0,1)\to(1,0)\quad\text{(two-center changes)}.
\end{align*}

\textbf{Single-center change: $(0,1)\to(0,2)$.}
For $x_1=(0,1,m_1,m_2)$ and $x_2=(0,2,m'_1,m'_2)$ consider
\[
(0,1,m_1,m_2)\xrightarrow{m_2}(0,1,m_1,0)\xrightarrow{1}(0,2,m_1,b)\xrightarrow{|m_1-m'_1|}(0,2,m'_1,b)\xrightarrow{b-m'_2}(0,2,m'_1,m'_2),
\]
so $\ell\le m_2+1+|m_1-m'_1|+(b-m'_2)\le a+2b+1$. Therefore, to prove $a+2b+1 \le \lfloor \frac{3n}{2}\rfloor = \lfloor\frac{3(a+b+2)}{2}\rfloor$, it suffices to show that $b-2 \le \Big\lfloor \frac{a+b}{2}\Big\rfloor$, or equivalently, $b-2 \le \frac{a+b}{2}$. The latter inequality $2b-4\leq a+b$ holds because $b-a\le 4$.

\textbf{Two-center changes.}
It suffices to treat $(0,1)\to(1,2)$ and $(0,1)\to(1,0)$.

\textbf{Pattern $(0,1)\to(1,2)$.} We construct two paths in $\C_3(S(a,b))$ between $x_1$ and $x_2$:
\begin{align*}
R_1&\colon (0,1,m_1,m_2)\xrightarrow{m_1+m_2}(0,1,0,0)\xrightarrow{1}(0,2,0,b)\xrightarrow{1}(1,2,a,b)\xrightarrow{(a-m'_1)+(b-m'_2)}(1,2,m'_1,m'_2),\\
R_2&\colon (0,1,m_1,m_2)\xrightarrow{(a-m_1)+(b-m_2)}(0,1,a,b)\xrightarrow{2}(2,0,0,0)\\
&\hspace{3.3cm}\xrightarrow{a+b}(2,0,a,b)\xrightarrow{2}(1,2,0,0)\xrightarrow{m'_1+m'_2}(1,2,m'_1,m'_2),
\end{align*}
with $\ell(R_1)+\ell(R_2)=3a+3b+6=3n$, hence $\min\{\ell(R_1),\ell(R_2)\}\le\lfloor 3n/2\rfloor$.

\textbf{Pattern $(0,1)\to(1,0)$.}  We construct two paths in $\C_3(S(a,b))$ between $x_1$ and $x_2$:
\begin{align*}
S_1&\colon (0,1,m_1,m_2)\xrightarrow{(a-m_1)+(b-m_2)}(0,1,a,b)\xrightarrow{2}(2,0,0,0)\\
&\hspace{3.3cm}\xrightarrow{a}(2,0,a,0)\xrightarrow{1}(1,0,0,0)\xrightarrow{m'_1+m'_2}(1,0,m'_1,m'_2),\\
S_2&\colon (0,1,m_1,m_2)\xrightarrow{m_1+m_2}(0,1,0,0)\xrightarrow{2}(1,2,a,b)\\
&\hspace{3.3cm}\xrightarrow{b}(1,2,a,0)\xrightarrow{1}(1,0,a,b)\xrightarrow{(a-m'_1)+(b-m'_2)}(1,0,m'_1,m'_2),
\end{align*}
and again $\ell(S_1)+\ell(S_2)=3n$, so $\min\{\ell(S_1),\ell(S_2)\}\le\lfloor 3n/2\rfloor$.

Combining all the cases above, we conclude that $\dist_{\C_3(T)}(x_1,x_2)\le \lfloor 3n/2\rfloor$ for all pairs of colorings $x_1, x_2$. This proves $\diam(\C_3(S(a,b)))=\lfloor 3n/2\rfloor$.
\end{proof}

Together with Lemma~\ref{lem:universal-lower-bound}, this completes the proof of the second part of Theorem~\ref{thm:main}.

\appendix 

\section{Small trees and balanced labelings}\label{sec:small-trees}

In this appendix we list, up to isomorphism, all trees $T$ on $n\le 6$ vertices. For each tree, we display a balanced labeling $h\colon V\to \mathbb{Z}$ whose norm $\|h\|_1$ achieves $\diam(\C_3(T))$. 

Vertices are drawn as black dots, and the integer next to a vertex $v$ is the label $h(v)$. The bold number (in red font) beneath each figure is the 3-coloring diameter.

\medskip

\tikzset{
  smvtx/.style={circle,fill=black,inner sep=1.3pt},
  smedge/.style={line width=0.9pt},
  lab/.style={font=\scriptsize, inner sep=1pt}
}

\noindent\textbf{Trees on 1, 2, and 3 vertices}

\begin{center}
\begin{minipage}{0.28\textwidth}
\centering
\begin{tikzpicture}[scale=1]
  \node[smvtx] (v0) at (0,0) {};
  \node[lab, above=3pt of v0] {$1$};
\end{tikzpicture}

\vspace{2pt}
{\small \textcolor{red}{\textbf{1}}}
\end{minipage}
\hfill
\begin{minipage}{0.28\textwidth}
\centering
\begin{tikzpicture}[scale=1]
  \node[smvtx] (v0) at (0,0) {};
  \node[smvtx] (v1) at (1.2,0) {};
  \draw[smedge] (v0)--(v1);
  \node[lab, above=3pt of v0] {$1$};
  \node[lab, above=3pt of v1] {$2$};
\end{tikzpicture}

\vspace{2pt}
{\small \textcolor{red}{\textbf{3}}}
\end{minipage}
\hfill
\begin{minipage}{0.28\textwidth}
\centering
\begin{tikzpicture}[scale=1]
  \node[smvtx] (v0) at (0,0) {};
  \node[smvtx] (v1) at (1.0,0) {};
  \node[smvtx] (v2) at (2.0,0) {};
  \draw[smedge] (v0)--(v1);
  \draw[smedge] (v1)--(v2);
  \node[lab, above=3pt of v0] {$1$};
  \node[lab, above=3pt of v1] {$1$};
  \node[lab, above=3pt of v2] {$2$};
\end{tikzpicture}

\vspace{2pt}
{\small \textcolor{red}{\textbf{4}}}
\end{minipage}
\end{center}

\medskip 

\noindent\textbf{Trees on 4 vertices}

\begin{center}
\begin{minipage}{0.4\textwidth}
\centering
\begin{tikzpicture}[scale=1]
  \node[smvtx] (v0) at (0,0) {};
  \node[smvtx] (v1) at (1.0,0) {};
  \node[smvtx] (v2) at (2.0,0) {};
  \node[smvtx] (v3) at (3.0,0) {};
  \draw[smedge] (v0)--(v1);
  \draw[smedge] (v1)--(v2);
  \draw[smedge] (v2)--(v3);
  \node[lab, above=3pt of v0] {$1$};
  \node[lab, above=3pt of v1] {$1$};
  \node[lab, above=3pt of v2] {$2$};
  \node[lab, above=3pt of v3] {$2$};
\end{tikzpicture}

\vspace{2pt}
{\small \textcolor{red}{\textbf{6}}}
\end{minipage}
\hfill
\begin{minipage}{0.4\textwidth}
\centering
\begin{tikzpicture}[scale=1]
  \node[smvtx] (c)  at (0,0) {};
  \node[smvtx] (v1) at (1.2,0) {};
  \node[smvtx] (v2) at (-0.9,0.9) {};
  \node[smvtx] (v3) at (-0.9,-0.9) {};
  \draw[smedge] (c)--(v1);
  \draw[smedge] (c)--(v2);
  \draw[smedge] (c)--(v3);
  \node[lab, above=3pt of c] {$1$};
  \node[lab, above=3pt of v1] {$1$};
  \node[lab, above=3pt of v2] {$2$};
  \node[lab, below=3pt of v3] {$2$};
\end{tikzpicture}

\vspace{2pt}
{\small \textcolor{red}{\textbf{6}}}
\end{minipage}
\end{center}

\medskip 

\noindent\textbf{Trees on 5 vertices}

\begin{center}
\begin{minipage}{0.32\textwidth}
\centering
\begin{tikzpicture}[scale=1]
  \node[smvtx] (v0) at (0,0) {};
  \node[smvtx] (v1) at (0.9,0) {};
  \node[smvtx] (v2) at (1.8,0) {};
  \node[smvtx] (v3) at (2.7,0) {};
  \node[smvtx] (v4) at (3.6,0) {};
  \draw[smedge] (v0)--(v1);
  \draw[smedge] (v1)--(v2);
  \draw[smedge] (v2)--(v3);
  \draw[smedge] (v3)--(v4);
  \node[lab, above=3pt of v0] {$1$};
  \node[lab, above=3pt of v1] {$1$};
  \node[lab, above=3pt of v2] {$1$};
  \node[lab, above=3pt of v3] {$2$};
  \node[lab, above=3pt of v4] {$2$};
\end{tikzpicture}

\vspace{2pt}
{\small \textcolor{red}{\textbf{7}}}
\end{minipage}
\hfill
\begin{minipage}{0.32\textwidth}
\centering
\begin{tikzpicture}[scale=1]
  \node[smvtx] (c)  at (0,0) {};
  \node[smvtx] (v1) at (1.2,0) {};
  \node[smvtx] (v2) at (-1.2,0) {};
  \node[smvtx] (v3) at (0,1.2) {};
  \node[smvtx] (v4) at (0,-1.2) {};
  \draw[smedge] (c)--(v1);
  \draw[smedge] (c)--(v2);
  \draw[smedge] (c)--(v3);
  \draw[smedge] (c)--(v4);
  \node[lab, above right=3pt of c] {$1$};
  \node[lab, above=3pt of v1] {$1$};
  \node[lab, above=3pt of v2] {$1$};
  \node[lab, right=3pt of v3] {$2$};
  \node[lab, right=3pt of v4] {$2$};
\end{tikzpicture}

\vspace{2pt}
{\small \textcolor{red}{\textbf{7}}}
\end{minipage}
\hfill
\begin{minipage}{0.32\textwidth}
\centering
\begin{tikzpicture}[scale=1]
  \node[smvtx] (v1) at (0,0) {};  
  \node[smvtx] (v0) at (-1,0) {};  
  \node[smvtx] (v2) at (1,0) {};  
  \node[smvtx] (v3) at (0,1.0) {}; 
  \node[smvtx] (v4) at (2,0) {}; 
  \draw[smedge] (v1)--(v0);
  \draw[smedge] (v1)--(v2);
  \draw[smedge] (v1)--(v3);
  \draw[smedge] (v2)--(v4);
  \node[lab, above=3pt of v0] {$1$};
  \node[lab, above right=3pt of v1] {$1$};
  \node[lab, above=3pt of v2] {$1$};
  \node[lab, right=3pt of v3] {$2$};
  \node[lab, above=3pt of v4] {$2$};
\end{tikzpicture}

\vspace{2pt}
{\small \textcolor{red}{\textbf{7}}}
\end{minipage}
\end{center}

\medskip 

\noindent\textbf{Trees on 6 vertices}

\begin{center}
\begin{minipage}{0.48\textwidth}
\centering
\begin{tikzpicture}[scale=1]
  \node[smvtx] (v0) at (-2.5,0) {};
  \node[smvtx] (v1) at (-1.5,0) {};
  \node[smvtx] (v2) at (-0.5,0) {};
  \node[smvtx] (v3) at (0.5,0) {};
  \node[smvtx] (v4) at (1.5,0) {};
  \node[smvtx] (v5) at (2.5,0) {};
  \draw[smedge] (v0)--(v1);
  \draw[smedge] (v1)--(v2);
  \draw[smedge] (v2)--(v3);
  \draw[smedge] (v3)--(v4);
  \draw[smedge] (v4)--(v5);
  \node[lab, above=3pt of v0] {$-1$};
  \node[lab, above=3pt of v1] {$0$};
  \node[lab, above=3pt of v2] {$1$};
  \node[lab, above=3pt of v3] {$2$};
  \node[lab, above=3pt of v4] {$3$};
  \node[lab, above=3pt of v5] {$4$};
\end{tikzpicture}

\vspace{2pt}
{\small \textcolor{red}{\textbf{11}}}
\end{minipage}%
\hfill
\begin{minipage}{0.48\textwidth}
\centering
\begin{tikzpicture}[scale=1]
  \node[smvtx] (w0) at (-2,0) {}; 
  \node[smvtx] (w1) at (-1,0) {};  
  \node[smvtx] (w2) at (0,0) {};  
  \node[smvtx] (w3) at (1,0) {};   
  \node[smvtx] (w4) at (2,0) {};  
  \node[smvtx] (w5) at (-1,1.2) {}; 

  \draw[smedge] (w0)--(w1);
  \draw[smedge] (w1)--(w2);
  \draw[smedge] (w2)--(w3);
  \draw[smedge] (w3)--(w4);
  \draw[smedge] (w1)--(w5);

  \node[lab, above=3pt of w0] {$0$};
  \node[lab, below=3pt of w1] {$1$};
  \node[lab, above=3pt of w2] {$2$};
  \node[lab, above=3pt of w3] {$3$};
  \node[lab, above=3pt of w4] {$4$};
  \node[lab, right=3pt of w5] {$0$};
\end{tikzpicture}

\vspace{2pt}
{\small \textcolor{red}{\textbf{10}}}
\end{minipage}

\medskip

\begin{minipage}{0.48\textwidth}
\centering
\begin{tikzpicture}[scale=1]
  \node[smvtx] (c1) at (-1,0) {};     
  \node[smvtx] (c2) at (1,0) {};     
  \node[smvtx] (a1) at (-2,1) {};
  \node[smvtx] (a2) at (-2,-1) {};
  \node[smvtx] (b1) at (2,1) {};
  \node[smvtx] (b2) at (2,-1) {};
  \draw[smedge] (c1)--(a1);
  \draw[smedge] (c1)--(a2);
  \draw[smedge] (c1)--(c2);
  \draw[smedge] (c2)--(b1);
  \draw[smedge] (c2)--(b2);
  \node[lab, above=3pt of c1] {$1$};
  \node[lab, above=3pt of c2] {$2$};
  \node[lab, above=3pt of a1] {$1$};
  \node[lab, below=3pt of a2] {$1$};
  \node[lab, above=3pt of b1] {$2$};
  \node[lab, below=3pt of b2] {$2$};
\end{tikzpicture}

\vspace{2pt}
{\small \textcolor{red}{\textbf{9}}}
\end{minipage}%
\hfill
\begin{minipage}{0.48\textwidth}
\centering
\begin{tikzpicture}[scale=1]

  \node[smvtx] (d1) at (-1,0) {};   
  \node[smvtx] (d2) at (1.5,0) {};  
  \node[smvtx] (e1) at (-2.2,0.6) {};   
  \node[smvtx] (e2) at (-2.4,0.0) {};   
  \node[smvtx] (e3) at (-2.2,-0.6) {};  
  \node[smvtx] (f1) at (2.7,0) {};
  \draw[smedge] (d1)--(d2);
  \draw[smedge] (d1)--(e1);
  \draw[smedge] (d1)--(e2);
  \draw[smedge] (d1)--(e3);
  \draw[smedge] (d2)--(f1);
  \node[lab, above=3pt of d1] {$1$};
  \node[lab, above=3pt of d2] {$1$};

  \node[lab, above=3pt of e1] {$1$};
  \node[lab, above=3pt of e2] {$2$};
  \node[lab, below=3pt of e3] {$2$};

  \node[lab, above=3pt of f1] {$2$};
\end{tikzpicture}

\vspace{2pt}
{\small \textcolor{red}{\textbf{9}}}
\end{minipage}

\medskip

\begin{minipage}{0.48\textwidth}
\centering
\begin{tikzpicture}[scale=1]
  \node[smvtx] (c) at (0,0) {};
  \foreach \i/\ang in {1/90,2/162,3/234,4/306,5/18} {
    \node[smvtx] (v\i) at ({1.6*cos(\ang)},{1.6*sin(\ang)}) {};
    \draw[smedge] (c)--(v\i);
  }
  \node[lab, above right=3pt of c] {$1$};
  \node[lab, above=3pt of v1] {$2$};
  \node[lab, left=3pt of v2] {$1$};
  \node[lab, left=3pt of v3] {$2$};
  \node[lab, right=3pt of v4] {$1$};
  \node[lab, right=3pt of v5] {$2$};
\end{tikzpicture}

\vspace{2pt}
{\small \textcolor{red}{\textbf{9}}}
\end{minipage}%
\hfill
\begin{minipage}{0.48\textwidth}
\centering
\begin{tikzpicture}[scale=1]
  \node[smvtx] (cB)  at (0,0) {};   
  \node[smvtx] (uL) at (-1,0) {};   
  \node[smvtx] (uR) at (1,0) {};  
  \node[smvtx] (uT) at (0,1.4) {};  
  \node[smvtx] (vL) at (-2,0) {};  
  \node[smvtx] (vR) at (2,0) {};  
  \draw[smedge] (cB)--(uL);
  \draw[smedge] (cB)--(uR);
  \draw[smedge] (cB)--(uT);
  \draw[smedge] (uL)--(vL);
  \draw[smedge] (uR)--(vR);
  \node[lab, above right=3pt of cB]  {$2$};
  \node[lab, above=3pt of uL] {$2$};
  \node[lab, above=3pt of uR] {$1$};
  \node[lab, above=3pt of uT] {$1$};
  \node[lab, above=3pt of vL] {$1$};
  \node[lab, above=3pt of vR] {$2$};
\end{tikzpicture}

\vspace{2pt}
{\small \textcolor{red}{\textbf{9}}}
\end{minipage}
\end{center}

For each $n \le 5$, all trees on $n$ vertices have the same 3-coloring diameter. Among all trees on $6$ vertices, the path $P_6$ is still the unique tree with maximum 3-coloring diameter, namely $11$. Besides the star $K_{1,5}$ and the double stars $S(2,2)$ and $S(3,1)$, there is another tree with degree sequence $3,2,2,1,1,1$ that also attains the minimum 3-coloring diameter, namely $9$.

\section*{Statements and Declarations}

The authors declare that no funds, grants, or other support were received during the preparation of this manuscript. Furthermore, the authors have no relevant financial or non-financial interests to disclose. 
All authors contributed equally to the research and preparation of the manuscript.

\bibliographystyle{alpha}
\bibliography{main}

\end{document}